\documentclass[11pt]{article}
\usepackage[utf8]{inputenc}
\usepackage[english]{babel}
\usepackage{amsmath}
\usepackage{amsthm}
\usepackage{amssymb}
\usepackage{titling}
\usepackage{cite}
\usepackage{hyper}
\usepackage{url}
\usepackage[a4paper]{geometry}
\usepackage[T1]{fontenc}

\title{Limits of structures and Total NP Search Problems\protect\footnote{This work has been supported by Charles University Research Center program No.UNCE/SCI/022, the project SVV-2023-260721 and by the GA UK project No. 246223.}}
\author{Ondřej Ježil \\
\texttt{ondrej.jezil@email.cz}}
\date{Faculty of Mathematics and Physics, Charles University\protect\footnote{Sokolovská 83, Prague, 186 75, The Czech Republic}}

\begin{document}
\def\rrbracket{{]\!]}}
\def\llbracket{{[\![}}
\newcommand{\PPA}{\textbf{PPA}}
\newcommand{\PPAD}{\textbf{PPAD}}
\newcommand{\LEAF}{\textsc{Leaf}}
\newcommand{\OntoWeakPigeon}{\textsc{RetractionWeakPigeon}}
\newcommand{\Pigeon}{\textsc{Pigeon}}
\newcommand{\SourceOrSink}{\textsc{SourceOrSink}}
\newcommand{\WeakPigeon}{\textsc{WeakPigeon}}
\newcommand{\EDGE}{\text{EDGE}}
\newcommand{\SK}{\text{SK}}
\newcommand{\Th}{\text{Th}}
\newcommand{\PV}{\text{PV}}
\newcommand{\s}{\texttt{s}}
\newcommand{\w}{\texttt{w}}
\renewcommand{\u}{\texttt{u}}
\newcommand{\p}{\texttt{p}}
\newcommand{\up}{\texttt{up}}
\newcommand{\LPV}{L_\text{PV}}
\newcommand{\WPHP}{\textbf{WPHP}}
\newcommand{\PWPP}{\textbf{PWPP}}
\newcommand{\PPP}{\textbf{PPP}}
\newcommand{\pPATH}{*\text{PATH}}
\newcommand{\Fnb}{F_\text{nb}}
\newcommand{\FPV}{F_\text{PV}}
\newcommand{\phie}{\varphi_{E}}
\newcommand{\phidegn}{\varphi_{\text{deg=0}}}
\newcommand{\phidegi}{\varphi_{\text{deg=1}}}
\newcommand{\phidegii}{\varphi_{\text{deg=2}}}
\newcommand{\phiLEAF}{\varphi_{\text{\LEAF}}}
\newcommand{\phiOWPHP}{\varphi_{\text{\OntoWeakPigeon}}}
\newcommand{\Tnb}{\T_\text{nb}}
\newcommand{\Tq}{\T_\text{q}}
\newcommand{\Mnh}{M_{n,\floor{n/2}}}
\newcommand{\Gnb}{G_\text{nb}}
\newcommand{\Fconst}{F_\text{const}}
\newcommand{\Frud}{F_\text{rud}}
\newcommand{\Trud}{\T_\text{rud}}
\newcommand{\0}{\textbf{0}}
\newcommand{\1}{\textbf{1}}
\newcommand{\Land}{\bigwedge}
\newcommand{\Lor}{\bigvee}
\newcommand{\A}{\mathcal{A}}
\newcommand{\B}{\mathcal{B}}
\newcommand{\I}{\mathcal{I}}
\newcommand{\C}{\mathcal{C}}
\newcommand{\G}{\mathcal{G}}
\newcommand{\M}{\mathcal{M}}
\newcommand{\T}{\mathcal{T}}
\renewcommand{\O}{\mathcal{O}}
\newcommand{\RR}{\mathbb{R}}
\newcommand{\NN}{\mathbb{N}}
\newcommand{\QQ}{\mathbb{Q}}
\newcommand{\ZZ}{\mathbb{Z}}
\newcommand{\bbl}{\llbracket}
\newcommand{\bbr}{\rrbracket}
\newcommand{\st}{\text{st}}
\newcommand{\new}{\text{new}}
\newcommand{\dom}{\text{dom}}
\newcommand{\TFNP}{\textbf{TFNP}}
\newcommand{\FP}{\textbf{FP}}
\newcommand{\NP}{\textbf{NP}}
\renewcommand{\P}{\textbf{P}}
\newcommand{\abs}[1]{\lvert#1 \rvert}
\newcommand{\floor}[1]{\lfloor #1 \rfloor}
\newcommand{\ceil}[1]{\lceil #1 \rceil}
% Definitions of amsthm environments
\newtheoremstyle{mystyle}%                % Name
  {}%                                     % Space above
  {}%                                     % Space below
  {}%                             % Body font
  {}%                                     % Indent amount
  {\bfseries}%                            % Theorem head font
  {.}%                                    % Punctuation after theorem head
  { }%                                    % Space after theorem head, ' ', or \newline
  {}%                                     % Theorem head spec (can be left empty, meaning `normal')

\theoremstyle{mystyle}
\newtheorem{thrm}{Theorem}[section]
\newtheorem*{thrmm}{Theorem}
\newtheorem{lemm}[thrm]{Lemma}
\newtheorem*{lemmm}{Lemma}
\newtheorem{claim}[thrm]{Claim}
\newtheorem{defi}[thrm]{Definition}
\newtheorem{exam}[thrm]{Example}
\newtheorem{crll}[thrm]{Corollary}

\newtheorem{defn}{Definition}

\newtheorem*{cor}{Corollary}
\newtheorem*{rem}{Remark}
\newtheorem*{example}{Example}
\newtheorem*{prope}{Property}
\newtheorem*{prob}{Problem}

\maketitle
\begin{abstract}
    For an infinite class of finite graphs of unbounded size, we define a limit object, to be called a \emph{wide limit}, relative to some computationally restricted class of functions. The limit object is a first order Boolean-valued structure. The first order properties of the wide limit then reflect how a computationally restricted viewer ``sees'' a generic
    member of the class. The construction uses arithmetic forcing with
    random variables~\cite{krajicek2010forcing}. We give sufficient conditions for universal and existential sentences to be valid in the limit, provide several examples, and prove that such a limit object can then be expanded to a model of weak arithmetic. 
    
    To illustrate the concept we give an example in which the wide limit relates to total NP search problems. In particular, we take the wide limit of all maps from $\{0,\dots,k-1\}$ to $\{0,\dots,\floor{k/2}-1\}$ to obtain a
    model of $\forall \PV_1(f)$ where the problem $\OntoWeakPigeon$ is total but $\WeakPigeon$, the complete problem for $\PWPP$, is not. Thus, we obtain a new proof of this unprovability and show it implies that $\WeakPigeon$ is not many-one reducible to $\OntoWeakPigeon$ in the oracle setting.
\end{abstract}

\section{Introduction}
\addcontentsline{toc}{chapter}{Introduction}

The notion of limits of finite structures is prevalent both in logic and in combinatorics. In logic the examples are the ultraproduct and the compactness theorem, which was used in~\cite{Fagin1976} to prove the $0$--$1$ law for structures over relational vocabularies. In combinatorics the dense graph limit defined in \cite{lovasz2006limits} provided a framework to restate and find new proofs for results in extremal graph theory --- for instance Goodman's theorem relating the number of edges to the number of triangles in a graph. More notions of graph limits are discussed in~\cite{Nesetril2013}. Another recent use of limit objects for results of extremal combinatorics was by Razborov in~\cite{razborov2007flag}. 

In this work we define a new construction of a limit object. 
Given a class of finite graphs $\G$, whose vertex sets are initial segments of $\NN$, we can stratify it into the sequence of sets $\{\G_k\}_{k=1}^\infty$ as follows
\[\G_k=\{\omega\in \G; \omega\text{ has $\{0,\dots, k-1\}$ as its vertex set}\}.\]
%Our construction would yield a pseudofinite structure if $\lim_{k\to\infty} \abs{\G_k}=1$, but an ordinary application of the compactness theorem suffices for that, we therefore generally care about the case, where $\lim_{k\to\infty}\abs{\G_k}=\infty$.\footnote{The case where the limit tends to some other positive number results in a structure which after collapsing to a two-valued boolean algebra becomes pseudofinite too.} 
We are interested in the case when the cardinalities of $\G_k$ are unbounded and hence our intended limit is a limit of sets of finite graphs. For this reason we call such a sequence of sets of graphs a \emph{wide sequence} and the limit object its \emph{wide limit}. The qualification wide refers to the fact that we are interested in sequences of sets of graphs rather than sequences of individual graphs.

We will give a full definition in Section \ref{secwidelimits}, but we describe some important properties here. For a class $F$ of functions (typically chosen to be a class with some computational restrictions) we define the wide limit denoted $\lim_{F}\G_n$, where $n$ is a technical parameter to be defined later. The wide limit $\lim_F \G_n$ is a Boolean-valued graph\footnote{Generally, we can do this with any $L$-structures for some first order language $L$. The limit object is then a Boolean-valued $L$-structure $\lim_{F}\G_n$. In this work we restrict ourselves to the language of graphs $L=\{E\}$ to simplify the presentation.} --- its edge relation does not only permit the truth values $\0$ and $\1$ but also many other values from some infinite complete Boolean algebra $\B$. This algebra is in fact also a measure algebra with a measure $\mu$ on it, so to any statement formulated as a first order sentence $\varphi$ we can assign a real number $\mu(\bbl\varphi\bbr)\in [0,1]$ which measures how far the truth value of $\varphi$ (denoted $\bbl\varphi\bbr$) is from the value $\0$. The key method we use is arithmetical forcing with random variables, developed in~\cite{krajicek2010forcing}, which allows us to construct models of (weak) arithmetical theories and by restricting to the language of graphs gives us Boolean-valued graphs. In these Boolean-valued graphs, existential statements which obtain the maximal truth-value $\1$ (valid sentences) correspond to the ability of $F$ to solve search problems over the class of graphs we are considering. 

Our limit object can be expanded to the original model that Krajíček's method would otherwise construct. We prove (Theorem~\ref{thrmtrans}) that the truth values of first order sentences concerning the object are preserved even when evaluated in the model of arithmetic relativized to the wide limit (under a mild condition on the family $F$). 

As an application of this construction, we reprove a many-one separation of black-box total NP search problems. A total NP search problem, first introduced in~\cite{PAPADIMITRIOU1994498}, is a problem which always has a solution of size polynomial in the input size and the correctness of its solution can be verified by a polynomial time machine. The qualifier black-box means that the input is given by a pair $(\O,x)$, where $x$ is a binary string and $\O$ is a function oracle. The problems of our interest are the following: The problem $\OntoWeakPigeon$, whose totality follows from a non-existence of a retraction from $\{0,\dots,k-1\}$ and $\{0,\dots,\floor{k/2}-1\}$. And the problem $\WeakPigeon$, whose totality follows from a non-existence of an injection between the same pair of sets.

We take a wide limit of all maps from the interval $\{0,\dots,k-1\}$ to the interval $\{0,\dots,\floor{k/2}-1\}$ relative to trees querying images of elements of subexponential depth (in the length of $k$) to obtain a Boolean-valued graph of a function $\lim_{\Fnb}\Mnh$, which is in a sense elementarily equivalent to an injective map from the interval $\{0,\dots,n-1\}$ to $\{0,\dots,\floor{n/2}-1\}$ for some non-standard number $n$. We then expand the wide limit into a model of arithmetic $K(\Mnh,\Fnb,\Gnb)$, where $\OntoWeakPigeon$ is total, but $\lim_{\Fnb}\Mnh$ is an instance of $\WeakPigeon$ which has no solution.

There is already an established connection between complexity of search problems and logic (namely witnessing theorems in bounded arithmetic, see~\cite{hanika2004thesis},~\cite{muller2021}). The model we construct is a model of relatively weak theory $\forall\PV_1(f)$, and also of open induction and open comprehension with parameters. The existence of this model gives a new proof of the result of Thapen~\cite[Theorem 4.10]{THAPEN2005}, that these principles along with the principle that $\OntoWeakPigeon$ is total cannot prove that the problem $\WeakPigeon$ is total.

This paper has two main parts. The emphasis of the paper is on the first \emph{conceptual} part, where we introduce the new notion of a wide limit. This part consists of Sections~\ref{secpreli}, \ref{secwidelimits} and \ref{secmodel}: In Section~\ref{secpreli} we recall preliminary notions, most importantly nonstandard models of arithmetic. In Section~\ref{secwidelimits} we give the definition of the wide limit, provide several examples and show how density of finite substructures corresponds to validity of existential sentences in the wide limit (Theorem~\ref{thrmsuffexist}). And in Section~\ref{secmodel} we prove that, under some reasonable assumptions, the wide limit can be expanded to a model of two sorted arithmetic (Theorem~\ref{thrmtrans}).

The second part, consisting of Sections~\ref{sectfnp} and \ref{secsepwphp}, is about our application of the new concept. In Section~\ref{sectfnp}, we recall the definition of query $\TFNP$ and show that the wide limit of all black-box instances of $\WeakPigeon$ is a graph of an injective map from $\{0,\dots,n-1\}$ to $\{0,\dots,\floor{n/2}-1\}$. Finally, in Section~\ref{secsepwphp}, we expand the wide limit to a model of two-sorted arithmetic, $K(\Mnh,\Fnb,\Gnb)$, where we verify the following properties: 
\begin{itemize}
    \item (Theorem~\ref{thrmwphp}): the wide limit is an instance of $\WeakPigeon$ without a solution,
    \item (Theorem~\ref{thrmrwphp}): every instance of $\OntoWeakPigeon$ has a solution,
    \item (Theorem~\ref{thrmopenind}): open comprehension and open induction,
    \item (Theorem~\ref{thrmpv}): the theory $\forall \PV_1(f)$,
\end{itemize}
and also refute the existence of a many-one reduction from the problem $\WeakPigeon$ to the problem $\OntoWeakPigeon$ (Theorem~\ref{thrmsep}). This separation is not new and follows from known results, see Section~\ref{secconc} for more details, but our illustration gives the separation a clear semantic interpretation.

%The remaining sections are devoted to illustrating a connection between wide limits and NP search problems. In Section~\ref{secleaf} we start by recalling the definition of total $\NP$ search problems. The rest is then dedicated to proving Theorem~\ref{thrmppanottotal} which says that the wide limit of hard $\LEAF$ instances $\pPATH_k$, relative to shallow trees querying the neighbor sets of vertices, has the same valid sentences as an infinite path with a beginning but no end. In Section~\ref{secmodel} we recall a definition of second order models of arithmetic $K(F,G)$ from~\cite{krajicek2010forcing} and show that (under some reasonable assumptions) every wide limit can be expanded to such a model in a way that preserves valid first order sentences. Finally, in Section~\ref{secsep} we combine the results from the previous two sections, to show that the wide limit of $\pPATH_k$ constructed earlier can be expanded to a model of (weak) second order arithmetic, where the problem $\LEAF$ is not total while the problem $\OntoWeakPigeon$ is. In Section \ref{subsecnonreducibility} we show how this construction implies nonreducibility of $\LEAF$ to $\OntoWeakPigeon$ and give a remark on how separations of problems in $\TFNP$ can be framed in our setting.

\section{Preliminaries} \label{secpreli}

By graphs, we mean structures in a language with a single binary relation denoted $E$ which is antireflexive and symmetric as we only consider undirected graphs in this work. We will denote any particular graph by $\omega$ as it will be used in some sense as a sample of a discrete probability space. The edge relation of a particular graph $\omega$ will be denoted $E_\omega$.

In the rest of this section, we recall notions needed for Krajíček's forcing construction. A fundamental notion we use throughout the work is of nonstandard models of (true) arithmetic. Let $L_{all}$ be the language containing the names of all relations and functions on the natural numbers and let $\Th_{L_{all}}(\NN)$ denote the set of true sentences in this language in the standard model $\NN$. By classical results of logic there exist $L_{all}$-structures in which all sentences from $\Th_{L_{all}}(\NN)$ are valid but which are not isomorphic to $\NN$. These are called \emph{nonstandard models} (of $\Th_{L_{all}}(\NN)$).

All nonstandard models of $\Th_{L_{all}}(\NN)$ (and even much weaker theories) contain an isomorphic copy of $\NN$ as an initial segment. Therefore, we can assume that in fact all models we encounter satisfy $\NN\subseteq \M$. After considering a concrete nonstandard model $\M$ (of $\Th_{L_{all}}(\NN)$) we shall call the elements of $\M\setminus\NN$ \emph{nonstandard numbers}. These can be intuitively understood as ``infinite natural numbers''. The key feature of those elements is that all functions and relations from $L_{all}$ are defined even on nonstandard numbers. This includes functions for coding sequences and sets by numbers, and therefore we can use notations like $a_0,\dots,a_{n-1}$ even for a nonstandard number $n$. The notation then means that for each $i\in\M$ such that $i<n$ we have an object $a_i$ coded by a number in $\M$ and that this whole sequence is coded by some number ${\{a_i\}}_{i=0}^{n-1}\in\M$. For a nonstandard number $S\in\M$ coding a set we denote its nonstandard size (cardinality) by $\abs{S}$. In the case where we talk about a binary string $x$ the notation $\abs{x}$ denotes the length of $x$ (which is nonstandard if $x$ is), and if $m$ is simply an element of $\M$ we denote $\abs{m}$ its bit length. The symbol $\infty$ in sequences $\{a_i\}_{i=0}^\infty$ implies indexing over standard natural numbers, and in limits $\lim_{k\to\infty}(\dots)$ has the standard meaning.

In the next section we will fix a nonstandard model $\M$ which has the model theoretic property of being $\aleph_1$-saturated. There is a self-contained construction of such model in~\cite[Appendix]{krajicek2010forcing}, but we never explicitly use the $\aleph_1$-saturation. The reason we require it is that without it, it might happen that the Boolean algebra we construct is not complete. A property, implied by our choice of language, which we do use explicitly is the following:

\begin{prope}
    For any sequence ${\{a_i\}}_{i=0}^\infty$ of standard natural numbers, there is a function symbol $f$ in $L_{all}$ such that for all $i\in\NN$ we have $\M \models f(i)=a_i$. This gives rise to a sequence $\{f(i)\}_{i\in \M}$ which agrees with $\{a_i\}_{i=0}^\infty$ on elements with standard index, but also has elements with index for any $i\in\M$. We call $\{f(i)\}_{i\in\M}$ the \emph{nonstandard prolongation} of $\{a_i\}_{i=0}^\infty$ and for any $m\in\M$ we will denote $f(m)$ as simply $a_m$. 
\end{prope}

Using this property, we will now allow ourselves to freely use any elements of $\M$ as indices of sequences of standard numbers and generally any sequences of standard finite objects, which can indeed be coded by standard natural numbers.

Any nonstandard model $\M$ can be extended to an ordered ring $\ZZ^\M$ by adding negative elements. This ring then can be extended to a fraction field $\QQ^\M$. We shall call elements of $\QQ^\M$ $\M$\emph{-rationals}. The field $\QQ^\M$ contains an isomorphic copy of $\QQ$ as a substructure. 
We shall use the structure $\QQ^\M$ analogously to how hyperreal numbers are used in nonstandard analysis. For more details about nonstandard analysis we recommend~\cite{goldbring2014lecture} to the interested reader. 
We call an element in $\QQ^\M$ with absolute value greater than all $\frac{k}{1},k\in\NN$, \emph{infinite}, otherwise we call it \emph{finite}. We call elements in $\QQ^\M$ with absolute value smaller than all $\frac{1}{k},k\in\NN$, \emph{infinitesimal}.
We will denote the set of finite $\M$-rationals as $\QQ^\M_{fin}$ and one can check it forms an ordered ring.

\begin{lemm}[\protect{{The existence of a standard part~\cite[Theorem 1.9]{goldbring2014lecture}}}]
    There is a function $\st:\QQ^\M_{fin}\to \RR$ assigning to each finite $\M$-rational a real number. The function $\st$ is a ring homomorphism and the kernel of $\st$ is exactly the ideal of infinitesimal numbers. When $q$ is a finite $\M$-rational we call $\st(q)$ its \emph{standard part}.
\end{lemm}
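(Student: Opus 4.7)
The plan is to build $\st$ via the classical Dedekind-cut construction: for each finite $q\in\QQ^\M_{fin}$, let
\[
L_q=\{r\in\QQ : r<q\}\subseteq\QQ,\qquad U_q=\{r\in\QQ : r>q\}\subseteq\QQ,
\]
where $<$ refers to the order on $\QQ^\M$ (with $\QQ$ identified with its canonical copy inside $\QQ^\M$). Since $q$ is finite, both $L_q$ and $U_q$ are nonempty: some standard integer $-k$ lies below $q$ and some $+k$ lies above. Moreover $L_q$ is bounded above in $\QQ$ (by any element of $U_q$), so by completeness of $\RR$ the supremum $\sup L_q\in\RR$ exists, and I define $\st(q):=\sup L_q$. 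The key property I would record at once is that for every standard rational $\epsilon>0$ there exist $a,b\in\QQ$ with $a<q<b$ and $\st(q)-\epsilon<a$ and $b<\st(q)+\epsilon$; this follows from the definition of supremum together with the fact that $L_q\cup U_q$ exhausts $\QQ$ (up to $q$ itself when $q\in\QQ$).

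Next I would verify that $\st$ is a ring homomorphism using this $\epsilon$-characterization. For addition: given $q_1,q_2\in\QQ^\M_{fin}$ and $\epsilon>0$ standard rational, choose $a_i<q_i<b_i$ in $\QQ$ with $\st(q_i)-\epsilon/2<a_i$ and $b_i<\st(q_i)+\epsilon/2$. Then $a_1+a_2<q_1+q_2<b_1+b_2$ sandwiches $q_1+q_2$ within $\epsilon$ of $\st(q_1)+\st(q_2)$, and letting $\epsilon\to 0$ forces $\st(q_1+q_2)=\st(q_1)+\st(q_2)$. The argument that $\st(1)=1$ is immediate, and for products one argues similarly, choosing the rational approximations $a_i,b_i$ sharp enough (depending on bounds for $|q_1|,|q_2|$, which exist because both are finite) so that $a_1a_2$ and $b_1b_2$ bracket $q_1q_2$ to within $\epsilon$ of $\st(q_1)\st(q_2)$; the mild nuisance here is tracking signs, which I would handle by a short case distinction, reducing to nonnegative arguments by writing $q=\st(q)+(q-\st(q))$ heuristically and, more formally, separating the four sign cases.

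Finally, the kernel. If $q$ is infinitesimal, then for every standard rational $\epsilon>0$ we have $-\epsilon<q<\epsilon$, so $L_q$ contains every negative rational and no rational exceeding $\epsilon$; hence $\sup L_q=0$. Conversely, if $\st(q)=0$, then for every standard rational $\epsilon>0$ the $\epsilon$-characterization above yields $a<q<b$ in $\QQ$ with $-\epsilon<a$ and $b<\epsilon$, so $|q|<\epsilon$, i.e.\ $q$ is infinitesimal. Thus $\ker\st$ is exactly the ideal of infinitesimals, which being the kernel of a ring homomorphism is automatically an ideal (and one can check it is closed under multiplication by arbitrary finite $\M$-rationals from the estimate $|qr|<M\epsilon$ when $|q|<\epsilon$ and $|r|<M$ for some standard $M$).

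I do not expect any single step to be a serious obstacle, since the argument is the standard one for hyperrationals; the one place requiring the most care is multiplicativity of $\st$, where one must keep the rational brackets $a_i,b_i$ fine enough relative to the finite bounds on $|q_1|,|q_2|$, and handle negative factors explicitly. The only substantive ingredient from outside this construction is Dedekind-completeness of $\RR$, used once to declare $\sup L_q$ well defined.
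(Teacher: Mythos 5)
The paper does not prove this lemma itself; it invokes it as Theorem~1.9 of Goldbring's lecture notes on nonstandard analysis, which is precisely the standard result whose proof you have reproduced. Your Dedekind-cut construction of $\st$, the $\epsilon$-bracketing argument for additivity and multiplicativity, and the two-sided verification of the kernel constitute the canonical argument, and they are correct; the details you leave implicit (the sign cases for products and the edge case when $q$ itself is a standard rational) are routine and you flag them appropriately.
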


The following result characterizes convergence of sequences of rational numbers using the $\M$-rational numbers $\QQ^\M$.

\begin{thrm}[\hspace{-0.02em}{{\cite[Theorem 3.2]{goldbring2014lecture}}}]\label{thrmnonstdanal}
    Let $\{a_i\}_{i=0}^\infty$ be a sequence of standard rational numbers and let $r\in\RR$. Then the following are equivalent.
    \begin{itemize}
        \item $\lim_{i\to\infty} a_i = r$
        \item For every nonstandard $t \in \M\setminus \NN$ we have: $\st(a_t)=r$.
    \end{itemize}
\end{thrm}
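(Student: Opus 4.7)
The plan is to prove the equivalence in both directions, using the nonstandard prolongation property stated just above as the vehicle for transferring first-order statements about the rational-valued sequence $\{a_i\}_{i=0}^\infty$ from $\NN$ to $\M$, and using the ring-homomorphism property of the standard-part map $\st$ to bridge $\QQ^\M$ with $\RR$. A subtlety present in both directions is that the real number $r$ has no name in $L_{all}$, so the inequalities I transfer will compare $a_i$ to standard rationals close to $r$, not to $r$ itself.

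For the forward direction, I would fix a nonstandard $t \in \M \setminus \NN$ and any standard $k \geq 1$. From $\lim_{i\to\infty} a_i = r$ I pick a standard $N_k$ such that $|a_i - r| < 1/k$ in $\RR$ holds for all standard $i \geq N_k$, which by the triangle inequality gives the purely rational statement $|a_i - a_{N_k}| < 2/k$ for all standard $i \geq N_k$. The latter statement transfers to $\M$ via the nonstandard prolongation, so it holds in particular for $i = t$ (since $t > N_k$ as $t$ is nonstandard). Thus $a_t$ is within a finite $\M$-rational distance of the standard rational $a_{N_k}$, hence $a_t$ is finite and $|\st(a_t) - a_{N_k}| \leq 2/k$ in $\RR$. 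Combining this with $|a_{N_k} - r| \leq 1/k$ gives $|\st(a_t) - r| \leq 3/k$, and letting $k$ range over $\NN$ forces $\st(a_t) = r$.

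For the reverse direction I would argue by contrapositive. If $\lim_{i\to\infty} a_i$ is not $r$, there is a rational $\delta > 0$ and an infinite set of standard indices $i$ with $|a_i - r| > \delta$. By density of $\QQ$ in $\RR$, pick a standard rational $q$ with $|q - r| < \delta/3$; these same indices then satisfy the purely rational inequality $|a_i - q| > 2\delta/3$. Enumerate them by a standard function $I : \NN \to \NN$ with $I(k) \geq k$ and prolong $I$ to $\M$. For any nonstandard $k \in \M \setminus \NN$ the value $t = I(k)$ is nonstandard, and the inequality $|a_{I(k)} - q| > 2\delta/3$ transfers to $\QQ^\M$. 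Applying $\st$ yields $|\st(a_t) - q| \geq 2\delta/3$ in $\RR$, which together with $|q - r| < \delta/3$ gives $|\st(a_t) - r| > \delta/3 > 0$, contradicting $\st(a_t) = r$.

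The main obstacle, and the reason both halves look slightly indirect, is precisely the absence of $r$ from the transferable language: the cleanest formulation would be to compare $a_t$ to $r$ directly inside $\M$, but one must instead compare $a_t$ to a standard rational approximation of $r$ and then pass the residual error through $\st$. Once this detour is set up, both directions reduce to the familiar $\varepsilon$--$N$ arguments routed through the transfer principle.
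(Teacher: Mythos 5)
The paper itself gives no proof of this statement --- it is imported from \cite[Theorem~3.2]{goldbring2014lecture} --- so there is no internal argument to compare against; I can only evaluate your proof on its own terms. It is correct, and it is essentially the standard nonstandard-analysis characterization of convergence, adapted carefully to the setting at hand. You also correctly identify and handle the one genuine subtlety: in Goldbring's text the result is stated for a nonstandard extension of $\RR$, where the limit $r$ has a name and one transfers $\abs{a_i - r} < 1/k$ directly, whereas here $\M$ extends only $\NN$ and produces $\QQ^\M$, so an irrational $r$ is not available inside the structure. Your detour through standard rational approximations ($a_{N_k}$ in the forward direction, the rational $q$ in the reverse) is exactly the right workaround, and both the transfer steps (pure $L_{all}$-sentences about the coded rational-valued sequence, with the standard parameters as named constants) and the appeals to the ring-homomorphism and order-preservation properties of $\st$ are used legitimately.

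One point to tighten: in the reverse direction, the sentence ``Applying $\st$ yields $\abs{\st(a_t) - q} \geq 2\delta/3$'' presupposes $a_t \in \QQ^\M_{fin}$, since $\st$ is only defined on finite $\M$-rationals. This is not a gap in substance --- if $a_t$ is infinite then $\st(a_t) = r$ already fails, which is all you need for the contradiction --- but the case split should be stated explicitly so that the application of $\st$ is licensed. With that small clarification, both directions are sound.
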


This theorem shows that computing a standard part of an $\M$-rational obtained from a nonstandard prolongation of some standard sequence is equivalent to computing the limit of this sequence. It will be apparent that computations emerging in the analysis of wide limits can be completed in a purely standard setting --- by computing limits of specific probabilities associated with the wide sequence we consider. In this work, we mostly present the computations with nonstandard parameters, this seems to be natural in the context of our limit object and in some sense provides a corresponding shift in perspective: Instead of directly analyzing the sequence we start with, we perform calculations with the nonstandard numbers which enumerate elements at a concrete nonstandard index of the sequence. A reader interested in interpreting those computations in the standard setting can in most cases simply imagine the manipulations to be prefixed with: 
\begin{center}
``For all sufficiently large standard $n$ \dots''
\end{center}

It is important for arithmetical forcing with random variables to consider discrete probability spaces of nonstandard size. We shall always use the uniform distribution on the samples, although this is not necessary for the general construction. Thus, the probability of an event coded by an element $A\in \M$ is then just the $\M$-rational number $\abs{A}/\abs{S}$ where $S$ is the set of all samples.

We conclude this section by restating classical inequalities used in this work using the nonstandard approach. Their validity follows from the usual inequalities and Theorem~\ref{thrmnonstdanal}.

\begin{thrm}[Bernoulli's inequality]\label{thrmbrnl}
    Let $y \in \M$, $x\in \QQ^\M$ and $x\geq -1$, then
    \[(1+x)^y \geq 1+yx.\]
    \end{thrm}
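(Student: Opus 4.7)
The plan is to deduce Bernoulli's inequality in $\M$ from its classical version by a direct transfer argument, using only the hypothesis that $\M \models \Th_{L_{all}}(\NN)$. Theorem~\ref{thrmnonstdanal} does not actually play a role for this particular inequality; it becomes relevant when a nonstandard reformulation requires passing to the standard part of a sequence of rationals, which is not the situation here.

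First I would fix a standard coding of rationals by natural numbers, for instance pairs in lowest terms together with a sign bit. Since $L_{all}$ contains every function and relation on $\NN$, the coded rational operations and the natural-power map $(r, y) \mapsto (1+r)^y$ are all available as symbols of $L_{all}$. The classical Bernoulli inequality, proved in $\NN$ by an easy induction on $y$ for each fixed coded rational $r \geq -1$, then translates into the $L_{all}$-sentence
\[
\forall q\, \forall y\, \bigl[\, q \text{ codes a rational } r \geq -1 \ \longrightarrow\ (1 + r)^y \geq 1 + y r \,\bigr],
\]
which belongs to $\Th_{L_{all}}(\NN)$. Because $\M$ is a model of this theory, the same sentence holds in $\M$.

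The remaining step is to identify the internal coded rationals of $\M$ with the externally defined ring $\QQ^\M$ constructed as the fraction field of $\ZZ^\M$. These are canonically isomorphic as ordered fields --- the fraction field construction is first-order uniform in $L_{all}$ --- and under this isomorphism the internally coded exponentiation agrees with the externally defined $(1+x)^y$. Transporting the inequality across this identification yields $(1+x)^y \geq 1 + yx$ for every $x \in \QQ^\M$ with $x \geq -1$ and every $y \in \M$. The one point that deserves care is precisely this matching between the internal coded $\QQ$ and the external $\QQ^\M$, but it is a routine verification that the first-order fraction-field construction behaves as expected in $\M$; there is no genuine obstacle.
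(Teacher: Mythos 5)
Your proof is correct and is a fleshed-out version of what the paper only gestures at: the paper offers no detailed argument, merely the remark that both restated classical inequalities ``follow from the usual inequalities and Theorem~\ref{thrmnonstdanal},'' and the intended mechanism is exactly the transfer you describe, namely expressing the classical Bernoulli inequality (over coded rationals) as an $L_{all}$-sentence true in $\NN$, invoking $\M\models\Th_{L_{all}}(\NN)$, and identifying the internal coded rationals with $\QQ^\M$. You are also right that Theorem~\ref{thrmnonstdanal} plays no genuine role here --- it is used for the adjacent Exponential equality (Theorem~\ref{thrmexpo}), where a limit of a standard sequence must be converted into a standard-part statement, not for Bernoulli's inequality, which is a pointwise statement about $\M$-rationals with no limit in sight.
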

    
    \begin{thrm}[Exponential equality]\label{thrmexpo}
    Let $x \in \M\setminus \NN$, then
    \[\st\left(\left(1-\frac{1}{x}\right)^x\right) = e^{-1}.\]
\end{thrm}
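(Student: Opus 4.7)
The plan is to reduce the statement to the classical real-analysis limit $\lim_{i\to\infty}(1-1/i)^i = e^{-1}$ and then transfer it to the nonstandard setting using Theorem~\ref{thrmnonstdanal}; no further nonstandard machinery beyond the nonstandard prolongation property is needed.

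First, I would consider the standard sequence of rationals $a_i = (1-1/i)^i$ for $i \geq 2$ (with any convention on $i=0,1$). By elementary calculus --- taking logarithms and expanding $\ln(1-1/i) = -1/i + O(1/i^{2})$, so that $i\ln(1-1/i) \to -1$ --- one obtains $\lim_{i\to\infty} a_i = e^{-1}$.

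Next, I would invoke the nonstandard prolongation property from the Preliminaries: the sequence $\{a_i\}_{i \in \NN}$ (encoded by pairs of natural numbers giving numerator and denominator) extends to a sequence indexed by all of $\M$, and by the $L_{all}$-elementary equivalence of $\M$ with $\NN$ the value at any $x \in \M$ with $x \geq 2$ is exactly $(1-1/x)^x$ as computed in $\QQ^\M$. Applying Theorem~\ref{thrmnonstdanal} to the convergent sequence $\{a_i\}$ then yields, for every nonstandard $x \in \M \setminus \NN$, that $\st(a_x) = \st\bigl((1-1/x)^x\bigr) = e^{-1}$, as required.

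The only mild point to verify --- which I do not expect to be a real obstacle --- is the transfer step: the closed $L_{all}$-term defining $a_i$ in $\NN$ must evaluate to $(1-1/x)^x$ in $\QQ^\M$ when its argument is a nonstandard $x$. This is precisely what the nonstandard prolongation property delivers, since integer-valued exponentiation and rational arithmetic are definable in $L_{all}$. In contrast with the companion Bernoulli inequality, no pointwise inequality has to be transferred here, which makes the argument slightly simpler.
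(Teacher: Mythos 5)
Your proof is correct and follows exactly the route the paper indicates: the paper does not spell out a proof of this theorem but prefaces it with the remark that ``their validity follows from the usual inequalities and Theorem~\ref{thrmnonstdanal},'' and your argument is precisely that reduction. You correctly identify the standard rational sequence $a_i=(1-1/i)^i$, establish its classical limit $e^{-1}$, extend it via the nonstandard prolongation property, observe that the $L_{all}$-definability of rational arithmetic and integer exponentiation makes the extension agree with $(1-1/x)^x$ in $\QQ^\M$, and apply Theorem~\ref{thrmnonstdanal} to conclude.
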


\section{Wide Limits}
\label{secwidelimits}

We shall define a wide limit of every sequence of the following form.

\begin{defi}
    A sequence of sets of graphs $\{\G_k\}_{k=1}^\infty$ is called \emph{a wide sequence} if the following holds:
    \begin{itemize}
        \item Every graph $\omega\in\G_k$ has the vertex set $\{0,\dots,k-1\}$.
        \item $\lim_{k\to\infty}\abs{\G_k}=\infty$.
    \end{itemize}
\end{defi}

By abuse of notation we will simply talk about a wide sequence $\G_k$ instead of $\{\G_k\}_{k=1}^\infty$. Since a wide limit is a Boolean-valued graph, we need to construct a Boolean algebra in which the truth evaluation of statements shall take place. For the construction of the Boolean algebra we will closely follow~\cite[Chapter 1]{krajicek2010forcing} albeit with slight changes. Let us fix now, for the rest of this work, an $\aleph_1$-saturated model of $\Th_{L_{all}}(\NN)$ which we will denote $\M$ and with it we fix a nonstandard number $n\in \M$.

\begin{defi}
    We define \[\A=\{A\subseteq \{0,\dots,n-1\}; A\in\M\},\]
    in words $\A$ is the set of subsets of $\{0,\dots,n-1\}$ coded by an element in $\M$. This is a Boolean algebra and to each $A\in\A$ we assign an $\M$-rational $\abs{A}/n$ which we call its \emph{counting measure}.
\end{defi}

Even though $\A$ is a Boolean algebra with a ``measure'' it is not a $\sigma$-algebra. Indeed, $\A$ contains all singletons $\{k\}$ for $k$ standard, but the countable union of those singletons, the set of standard natural numbers $\NN$, is not definable by overspill. However, if we had joins and meets of arbitrary subsets at our disposal it would allow us to interpret quantifiers in the Boolean-valued case, so we now want to `tweak' this Boolean algebra.

\begin{defi}
    Let $\I$ be the ideal of $\A$ consisting of elements with infinitesimal counting measure. We define $\B=\A/\I$. Each element in $\B$ is of the form $A/\I$, where $A\in\A$, and we define $\mu(A/\I)=\st(\abs{A}/n)$. We will denote the maximal element of $\B$ by $\1$ and the minimal element by $\0$.
\end{defi}

One can easily check that $\mu$ is well-defined since for all $A\in\I$ it holds that $\st(\abs{A}/n)=0$. The measure $\mu$ is called the Loeb measure. Relying on the $\aleph_1$-saturation of $\M$, the following then holds.

\begin{lemm}[\hspace{-0.02em}{{\cite[Lemma 1.2.1]{krajicek2010forcing}}}]
    $\B$ is a $\sigma$-algebra with a real valued measure $\mu$. Moreover, $\B$ is a complete Boolean algebra.
\end{lemm}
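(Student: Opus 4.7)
The plan is to establish well-definedness of $\mu$, then use $\aleph_1$-saturation to build joins of countable sequences in $\B$ (giving both $\sigma$-completeness of the algebra and $\sigma$-additivity of $\mu$), and finally bootstrap from $\sigma$-completeness to full completeness using that $\mu$ is strictly positive.

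First, I would verify the routine facts. If $A/\I = A'/\I$ then $A\triangle A' \in \I$, so $\abs{A}/n - \abs{A'}/n$ is infinitesimal and $\st$ gives the same value, so $\mu$ is well-defined. Finite additivity in $\B$ descends from additivity of the counting measure on disjoint unions in $\A$. Crucially, $\mu$ is \emph{strictly positive}: $\mu(A/\I)=0$ means $\abs{A}/n$ is infinitesimal, i.e.\ $A\in\I$, i.e.\ $A/\I=\0$.

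Next, the main step: given a countable sequence $\{A_i\}_{i=0}^\infty$ in $\A$, I would construct the join of $\{A_i/\I\}$ in $\B$. Set $B_k = A_0\cup\cdots\cup A_{k-1}$; the reals $\mu(B_k/\I)$ form an increasing sequence in $[0,1]$ converging to some $r$. Consider the partial type in a variable $x$ over $\M$ asserting $x\in\A$, $A_i\subseteq x$ for each $i\in\NN$, and $\abs{x}/n \le r + 1/k$ for each $k\ge 1$ (these are rational bounds expressible in $L_{all}$). Every finite sub-type is realised by $B_\ell$ for $\ell$ large enough that $\mu(B_\ell/\I)$ is within the required accuracy of $r$; by $\aleph_1$-saturation the whole type is realised by some $B\in\A$. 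Since $B\supseteq B_\ell$ for every standard $\ell$ and $\st(\abs{B}/n)\le r$, we get $\mu(B/\I)=r$. I then claim $B/\I$ is the least upper bound: for any other upper bound $C/\I$, each $A_i\setminus C\in\I$, whence $B\cap C$ still contains every $A_i$ modulo $\I$; its measure is at most $\mu(B/\I)=r$ but also at least $r$ (being an upper bound of the $B_k/\I$), so $\mu((B\setminus C)/\I)=0$ and by strict positivity $B\setminus C\in\I$, i.e.\ $B/\I\le C/\I$. Applied to pairwise $\I$-disjoint sequences, this formula $\mu(\bigvee A_i/\I)=r=\lim_k\sum_{i<k}\mu(A_i/\I)$ yields $\sigma$-additivity of $\mu$, so $\B$ is a $\sigma$-algebra with a real-valued measure.

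Finally I would upgrade $\sigma$-completeness to full completeness using strict positivity. Given an arbitrary family $\{b_\alpha\}_{\alpha\in I}$ in $\B$, let
\[
r=\sup\bigl\{\mu\bigl(\textstyle\bigvee_{\alpha\in F}b_\alpha\bigr): F\subseteq I \text{ finite}\bigr\}\in[0,1],
\]
and pick a countable chain of finite $F_k$ whose finite joins have measure tending to $r$. By $\sigma$-completeness the element $b=\bigvee_{\alpha\in\bigcup_k F_k} b_\alpha$ exists in $\B$ and has measure $r$. For any $\beta\in I$, if $b_\beta\not\le b$ then $b_\beta\setminus b\neq\0$, hence $\mu(b_\beta\setminus b)>0$ by strict positivity, giving $\mu(b\vee b_\beta)>r$ and contradicting the definition of $r$. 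So $b=\bigvee_\alpha b_\alpha$.

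The delicate step is the saturation argument: one must present the conditions ``$A_i\subseteq x$'' and ``$\abs{x}/n\le r+1/k$'' as a genuine partial type over countably many parameters of $\M$ (standard $i$, $k$, and the coded sets $A_i$) that is finitely satisfiable; everything else is a standard measure-theoretic tidy-up once the countable joins are in hand.
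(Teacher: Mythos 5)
The paper does not prove this lemma itself; it cites it from Krajíček's book \cite{krajicek2010forcing} without reproducing the argument. Your proof is a correct reconstruction of the standard argument from that source: well-definedness and strict positivity of $\mu$, $\aleph_1$-saturation to produce countable joins realizing the supremum of the measures, and strict positivity again to upgrade $\sigma$-completeness to full completeness.

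Two small presentational points. First, the conditions ``$\abs{x}/n \le r + 1/k$'' are not literally $L_{all}$-formulas over parameters of $\M$, since $r\in\RR$ need not be rational and is not an element of $\M$; you should instead fix a sequence of standard rationals $q_k$ with $q_k\downarrow r$ and include the conditions $b_k\cdot\abs{x} \le a_k\cdot n$ where $q_k=a_k/b_k$, which is evidently what you intended by ``rational bounds expressible in $L_{all}$.'' Second, in the finite-satisfiability check the measure-bound conditions hold automatically for every $B_\ell$, because $\st(\abs{B_\ell}/n)=\mu(B_\ell/\I)\le r<q_k$ and $q_k$ is standard; it is only the finitely many inclusion conditions $A_i\subseteq x$ that require $\ell$ to be large, so the phrase ``$\ell$ large enough that $\mu(B_\ell/\I)$ is within the required accuracy of $r$'' is pointing at the wrong constraint. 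Neither slip affects correctness.
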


It is important to note that $\1\in \B$ is the only element of $\B$ with measure $\mu(\1)=1$ and similarly $\0\in\B$ is the only element with measure $\mu(\0)=0$. Also, for $B,B'\in \B$ the inequality $B\leq B'$ implies $\mu(B) \leq \mu(B')$.

We now define precisely what we mean by the family of functions $F$ relative to which we will be taking the wide limit. This is still a part of Krajíček's construction, we just modify it to make it compatible with our setup that starts with a wide sequence.

For every $k\in\NN$ the set $\G_k$ is finite and thus can be coded by a standard number. Therefore, there is a nonstandard prolongation of this sequence, and we can consider the set coded by the nonstandard number $\G_n$, which matches the value of the function symbol in $L_{all}$ describing the function $k \mapsto \G_k$ when $k=n$. 

\begin{defi}
    Let $\{\G_k\}_{k=1}^\infty$ be a wide sequence. We say that $F$ is \emph{a family of random variables on} $\G_n$ if every $\alpha\in F$ is a function coded by a number in $\M$ with domain $\G_n$ and taking values in $\M$. We say $\alpha\in F$ is an $F$-vertex if for all $\omega\in\G_n$ it holds that $\alpha(\omega)\in\{0,\dots,n-1\}$. The set of all $F$-vertices is denoted $U(F)$.
\end{defi}

If the wide sequence $\{\G_k\}_{k=1}^\infty$ is clear from context we just say $F$ is a family of random variables. This is for now everything we need to recall from~\cite{krajicek2010forcing}, and we can proceed to define the central object of our work.

\begin{defi}[The wide limit]
    Let $\{\G_k\}_{k=1}^\infty$ be a wide sequence and let $F$ be a family of random variables on $\G_n$. We define \emph{the wide limit} $\lim_F\G_n$ as a $\B$-valued structure in the language consisting of a single binary relation symbol $\{E\}$ as follows. The universe of the wide limit is taken as the set of all $F$-vertices. We now inductively define the truth values for all $\{E\}$-sentences.
    \begin{itemize}
        \item $\bbl \alpha=\beta \bbr=\{\omega\in\G_n; \alpha(\omega)=\beta(\omega)\}/\I$
        \item $\bbl E(\alpha,\beta) \bbr=\{\omega\in\G_n; E_{\omega}(\alpha(\omega),\beta(\omega))\}/\I$
        \item $\bbl-\bbr$ commutes with $\lnot$, $\land$ and $\lor$ 
        \item $\bbl (\exists x)A(x)\bbr=\Lor_{\alpha\in U(F)}\bbl A(\alpha)\bbr$
        \item $\bbl (\forall x)A(x)\bbr=\Land_{\alpha\in U(F)}\bbl A(\alpha)\bbr$
    \end{itemize}
\end{defi}

To stress in which Boolean-valued structure is the truth evaluation $\bbl-\bbr$ taking place we will sometimes denote the evaluation $\C_1\bbl-\bbr$, $\C_2\bbl-\bbr$ for Boolean-valued structures $\C_1$ and $\C_2$ respectively. Furthermore, if $\C_1\bbl\varphi\bbr = \1$ for some sentence $\varphi$ we say $\varphi$ is \emph{valid} in $\C_1$.

Note that since $\G_n$ can be recovered from $F$ as the domain of its elements, the wide limit only depends on $F$, strictly speaking. We keep $\G_n$ in the notation to cover the situation where we have a very general family of functions (e.g. the family of polynomial functions $\FPV$) which can be applied to every wide sequence. Thus, the notation $\lim_{F}\G_n$ means that $F$ is restricted to those functions which take elements of $\G_n$ as an input even when $F$ possibly contains other functions too.

The potential variability of the parameter $n$ may also seem unnecessary and indeed in this section it is, but in Section~\ref{secsepwphp} we will assume that $n$ is a power of two, which will allow us to more easily translate between the results about wide limits and reducibility of search problems.

\subsection{Wide Limits for Shallow Decision Trees}

Now we shall define the first nontrivial family of random variables relative to which we shall take wide limits of several sequences. The functions in the family will be computed by shallow decision trees. So the shape of the wide limit reflects what can a tree of depth subexponential in $\abs{n}$ witness in the wide sequence with probability arbitrarily close to~$1$.

\begin{defi}
    Let $\Trud$ be a family\footnote{The subscript `rud' stands for rudimentary. The name for the family is taken from \cite{krajicek2010forcing}.} of labeled rooted binary trees in $\M$ of the following form.
    At each vertex the tree is labeled by an element of $\{0,\dots,n-1\}\times\{0,\dots,n-1\}$ and the two outgoing edges incident to it are labeled as $0$ and $1$ respectively. The leaves are labeled by an element of $\M$. The depth of the tree is bounded by a number of a form $n^{1/t}$ (rounded to the nearest element of $\M$) for some $t\in\M\setminus \NN$.

    A \emph{computation} of a $T\in\T_{rud}$ on some $\omega\in\G_n$ is defined as follows. Start at the root and interpret each label $(i,j)$ of the vertex as a question whether the pair $(i,j)$ is in the edge set $E_\omega$ and follow a path through $T$ reading $1$ as a positive answer and $0$ as a negative answer. The label of the leaf visited at the end of the path is the output of $T$ on $\omega$, denoted $T(\omega)$.
    
    We define $\Frud$ to be the set of all functions computed by a tree $T\in\Trud$.
\end{defi}

In the following example, we consider a simple wide sequence of sets of graphs with exactly one edge.

\begin{exam}\label{examedgeempty} Let $\EDGE_k=\{(\{0,\dots,k-1\},E);\abs{E}=1\}$. Since any $\omega\in\EDGE_k$ has only $1$ edge in all potential $k\cdot(k-1)/2$ edges, it is not likely a shallow tree will find the edge. This is the idea behind the proof of the following claim.

    \[\lim_{\Frud}\EDGE_n\bbl(\exists x)(\exists y)E(x,y)\bbr=\0\]
    Let $\alpha,\beta\in U(\Frud)$, we proceed by proving that \[\bbl E(\alpha,\beta)\bbr=\0\] which is enough to prove the theorem since \[\bbl(\exists x)(\exists y)E(x,y)\bbr=\Lor_{\alpha\in U(\Frud)}\Lor_{\beta\in U(\Frud)}\bbl E(\alpha,\beta)\bbr=\Lor_{\alpha\in U(\Frud)}\Lor_{\beta\in U(\Frud)}\0=\0.\]

    Let $\alpha$ and $\beta$ be computed by $T\in\Trud$ and $S\in\Trud$ respectively. Let the depth of both $T$ and $S$ be at most $n^{1/t}$, where $t\in\M\setminus \NN$. Walk down $T$ from the root and always prolong the path along the edge labeled $0$. On this path we have a set of at most $n^{1/t}$ different pairs of vertices as the labels of edges and a label of the leaf node $l_T$.

    We do the same for $S$, and we find another set of at most $n^{1/t}$ pairs of vertices and a label of the leaf $l_S$. The labels $l_S$ and $l_T$ are then combined to one last pair $\{l_S,l_T\}$. Now we just need to compute the probability that none of these $2n^{1/t}+1$ pairs of vertices are in the edge set $E_\omega$. 

    There are $\binom{n}{2}$ different graphs in $\EDGE_n$ and at least $\binom{n-4n^{1/t}-2}{2}$ graphs which fulfill our requirements, namely, those graphs whose sole edge is not incident with the vertices from the labels of the trees $S$ and $T$. The probability is by Theorem~\ref{thrmbrnl} at least
    \begin{align*}
        \frac{\binom{n-4n^{1/t}-2}{2}}{\binom{n}{2}}&=\frac{(n-4n^{1/t}-2)(n-4n^{1/t}-3)}{n(n-1)}\\
        &\geq \left(1-\frac{8n^{1/t}+6}{n}\right)
    \end{align*}
    after taking the standard part of the last line we get $\st(1-\frac{8n^{1/t}+6}{n})=1$. Therefore, $\mu(\bbl E(\alpha,\beta)\bbr)=0$ and $\bbl E(\alpha,\beta)\bbr=\0$.

\end{exam}

\subsection{Sufficient conditions for validity of universal and existential sentences}

To understand wide limits we need to compute the truth-values of sentences which describe properties whose complexity we are interested in. Generally, for sentences of arbitrary complexity, this can be hard. In this section we prove sufficient conditions at least for the validity of universal and existential sentences. 

We will start with the simpler condition for the validity of universal sentences. This is important also because we would like to know that a wide limit of a wide sequence of graphs is also a graph: Meaning the statement that $E$ is antireflexive and symmetric is valid in the wide limit, and this statement is indeed expressible as a universal sentence.

\begin{thrm}\label{thrmsuffuniv}
    Let $\G_k$ be a wide sequence and let $F$ be any family of random variables. Let $\varphi(x_0,\dots, x_{l-1})$ be an open $\{E\}$-formula and assume that 
    \[\lim_{k\to\infty}\Pr_{\omega \in \G_k}[\omega \models (\forall \overline x)\varphi(\overline x)]=1.\]

    Then $\lim_F\G_n\bbl(\forall \overline x)\varphi(\overline x)\bbr=\1$.
    
\end{thrm}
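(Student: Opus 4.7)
The natural approach is to unfold the definition of $\bbl-\bbr$ for a universal sentence and then reduce validity of each conjunct to the hypothesis via the standard part/limit correspondence from Theorem~\ref{thrmnonstdanal}. By the definition of the wide limit,
\[
\lim_F\G_n\bbl(\forall \overline x)\varphi(\overline x)\bbr \;=\; \Land_{\overline\alpha\in U(F)^l}\bbl \varphi(\overline\alpha)\bbr,
\]
so it suffices to show $\bbl \varphi(\overline\alpha)\bbr=\1$ for every $l$-tuple $\overline\alpha=(\alpha_0,\dots,\alpha_{l-1})$ of $F$-vertices.

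Fix such an $\overline\alpha$ and set
\[
A_{\overline\alpha}=\{\omega\in\G_n;\ \omega\models\varphi(\alpha_0(\omega),\dots,\alpha_{l-1}(\omega))\}, \qquad B=\{\omega\in\G_n;\ \omega\models(\forall\overline x)\varphi(\overline x)\}.
\]
The key trivial inclusion is $B\subseteq A_{\overline\alpha}$: whenever $\omega$ satisfies $\varphi$ on every tuple of vertices, it in particular satisfies $\varphi$ on the specific tuple $\overline\alpha(\omega)$. Both sets are coded by elements of $\M$: the $F$-vertices $\alpha_i$ are by definition coded in $\M$, and the set $B$ is defined in $\M$ by a bounded formula (a conjunction indexed by an $\M$-finite cube of size $n^l$ of a fixed standard open $\{E\}$-formula), so its code exists in $\M$ by the properties of $\Th_{L_{all}}(\NN)$.

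Now apply the hypothesis. The sequence $p_k=\Pr_{\omega\in\G_k}[\omega\models(\forall \overline x)\varphi(\overline x)]$ is a standard sequence of rationals converging to $1$; its nonstandard prolongation at index $n$ is precisely $\abs{B}/\abs{\G_n}\in\QQ^\M$. By Theorem~\ref{thrmnonstdanal}, $\st(\abs{B}/\abs{\G_n})=1$, so $\mu(B/\I)=1$. Monotonicity of the counting measure together with $B\subseteq A_{\overline\alpha}$ yields $\mu(A_{\overline\alpha}/\I)=1$, and since $\1$ is the only element of $\B$ of measure $1$, we conclude $\bbl\varphi(\overline\alpha)\bbr = A_{\overline\alpha}/\I=\1$. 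Taking the meet over all $\overline\alpha\in U(F)^l$ finishes the proof.

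\textbf{Where the work is.} There is no real combinatorial obstacle; everything is driven by the monotonicity $B\subseteq A_{\overline\alpha}$. The only point needing a moment of care is the ``transfer'' step that the set $B$ is coded in $\M$ and that $\abs{B}/\abs{\G_n}$ is the nonstandard prolongation of $p_k$; this is where the restriction to \emph{open} $\varphi$ is used, as it keeps the definition of $B$ a bounded $\M$-formula and lets us invoke Theorem~\ref{thrmnonstdanal}. Notably, no assumption on $F$ enters the argument, which is why the theorem holds for any family of random variables.
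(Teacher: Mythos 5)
Your proof is correct and follows the same route as the paper's, merely filling in details (the explicit inclusion $B\subseteq A_{\overline\alpha}$ and the monotonicity of $\mu$) that the paper leaves implicit in the one-liner ``Since $\varphi$ is open, we have for every tuple of $F$-vertices $\overline\alpha$ that $\bbl\varphi(\overline\alpha)\bbr=\1$.'' One small correction to your closing remark: the openness of $\varphi$ is not needed to ensure $B$ is coded in $\M$ or that $\abs{B}/\abs{\G_n}$ prolongs $p_k$ --- those hold for any standard first-order $\varphi$ since $\M\models\Th_{L_{all}}(\NN)$. Rather, openness is what licenses the identity $\bbl\varphi(\overline\alpha)\bbr = A_{\overline\alpha}/\I$ that you invoke in the final step: the Boolean valuation $\bbl-\bbr$ commutes with $\lnot,\land,\lor$ but evaluates quantifiers as meets/joins over $U(F)$, so for a non-open $\varphi$ the truth value $\bbl\varphi(\overline\alpha)\bbr$ need not coincide with $\{\omega:\omega\models\varphi(\overline\alpha(\omega))\}/\I$.
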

\begin{proof}

    %%%For every standard rational $\varepsilon$ there is a $k_0\in\NN$ such that for all $k>k_0$ \[\Pr_{\omega\in\G_k}[\omega \models (\forall \overline x)\varphi(\overline x)] \geq 1 - \varepsilon,\] which gives us $\st(\Pr_{\omega\in\G_n}[\omega\models (\forall \overline x)\varphi(x)])=1$.

    By the assumption and Theorem \ref{thrmnonstdanal} we get that $\st(\Pr_{\omega\in\G_n}[\omega\models (\forall \overline x)\varphi(x)])=1$.

    Since $\varphi$ is open, we have for every tuple of $F$-vertices $\overline \alpha$ that $\bbl \varphi(\overline \alpha)\bbr=\1$. Now 
    \begin{align*}
        \bbl(\forall x)\varphi(x)\bbr&=\Land_{\overline \alpha\in U(F)^l}\bbl\varphi(\overline \alpha)\bbr\\
        &=\Land_{\overline \alpha\in U(F)^l}\1\\
        &=\1.
    \end{align*}
    
    \vspace{-2em}
\end{proof}

\begin{crll}
    Let $\G_k$ be a wide sequence and $F$ any family of random variable, then $\lim_F \G_n$ is an $\{E\}$-structure in which both antireflexivity and symmetry of $E$ is valid (i.e. $\lim_{F}$ is a Boolean-valued graph).
\end{crll}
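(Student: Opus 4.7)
The plan is to derive the corollary as an immediate application of Theorem~\ref{thrmsuffuniv} to the two universal $\{E\}$-sentences expressing antireflexivity, namely $(\forall x)\lnot E(x,x)$, and symmetry, namely $(\forall x)(\forall y)(E(x,y) \to E(y,x))$. Both are universal closures of open $\{E\}$-formulas, so they have exactly the form required by the hypothesis of Theorem~\ref{thrmsuffuniv}.

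The verification of the hypothesis of Theorem~\ref{thrmsuffuniv} is trivial in both cases. By the convention fixed at the start of Section~\ref{secpreli}, every $\omega\in\G_k$ is already an undirected graph, i.e.\ its edge relation $E_\omega$ is antireflexive and symmetric. Hence for every $k$ and every $\omega\in\G_k$ we have $\omega \models (\forall x)\lnot E(x,x)$ and $\omega \models (\forall x)(\forall y)(E(x,y)\to E(y,x))$, so both probabilities
\[
\Pr_{\omega\in\G_k}[\omega\models (\forall x)\lnot E(x,x)]
\quad\text{and}\quad
\Pr_{\omega\in\G_k}[\omega\models(\forall x)(\forall y)(E(x,y)\to E(y,x))]
\]
are identically $1$, and in particular their limits as $k\to\infty$ equal $1$.

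Applying Theorem~\ref{thrmsuffuniv} to each sentence (with $F$ arbitrary) we conclude that both sentences evaluate to $\1$ in $\lim_F \G_n$, which is precisely the statement that antireflexivity and symmetry of $E$ are valid in the wide limit. There is no real obstacle here; the only thing worth double-checking is that the notion of ``$\{E\}$-structure'' used for $\lim_F \G_n$ allows a nonempty universe, which is guaranteed since $U(F)$ contains at least the constant $F$-vertices arising from elements of $\{0,\dots,n-1\}$.
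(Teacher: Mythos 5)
Your proof is correct and matches the paper's intent exactly: the corollary is stated in the paper as an immediate consequence of Theorem~\ref{thrmsuffuniv}, with no separate proof given, precisely because every $\omega\in\G_k$ is an undirected graph by the conventions of Section~\ref{secpreli}, so the required probabilities are identically $1$. Your closing remark about a nonempty universe is harmless but unnecessary, since the Boolean-valued evaluation of a universal sentence is $\1$ even when the universe $U(F)$ is empty.
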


Now to give a sufficient condition for the validity of an existential sentence $(\exists \overline x)\varphi(\overline x)$ we use the auxiliary value of \emph{density of $\varphi(x_0,\dots,x_{l-1})$} defined as the probability that a random graph $\omega\in\G_k$ and a random tuple $\overline a \in \{0,\dots,k-1\}^{l}$ satisfy $\omega\models\varphi(\overline a)$ and show that the limiting density gives a lower bound for the measure of $\bbl(\exists \overline x)\varphi(\overline x)\bbr$.

\begin{thrm}\label{thrmsuffexist}
    Let $\G_k$ be a wide sequence and let $F$ be a family of random variables which contains all constant functions. Let $\varphi(x_0,\dots,x_{l-1})$ be an open $\{E\}$-formula and let $p\in [0,1]$. Assume that
    \[\lim_{k\to\infty} \Pr_{\substack{\omega\in \G_k\\\overline a}}[\omega\models \varphi(\overline a)]\geq p,\]
    where $\overline a$ is sampled uniformly over all elements of $\{0,\dots,k-1\}^l$. Then \[\mu(\lim_F\G_n\bbl(\exists \overline x)\varphi(\overline x)\bbr)\geq p.\] 
    In particular if $p=1$ then $\lim_F\G_n\bbl(\exists \overline x)\varphi(\overline x)\bbr=\1.$
\end{thrm}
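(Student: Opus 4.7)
The plan is to witness the existential with a tuple of constant $F$-vertices chosen by an averaging argument, rather than trying to evaluate the Boolean join $\Lor_{\overline \alpha \in U(F)^l} \bbl \varphi(\overline \alpha) \bbr$ directly. Using that $F$ contains all constant functions, each tuple $\overline a \in \{0,\dots,n-1\}^l$ yields a tuple of constant $F$-vertices $\overline c_{\overline a}$, and because $\varphi$ is open the inductive clauses for $\bbl-\bbr$ immediately give
$$\bbl \varphi(\overline c_{\overline a}) \bbr \;=\; A_{\overline a}/\I, \qquad \text{where } A_{\overline a} = \{\omega \in \G_n : \omega \models \varphi(\overline a)\}.$$
Since $\bbl (\exists \overline x)\varphi(\overline x) \bbr$ dominates each $A_{\overline a}/\I$, it is enough to exhibit a single $\overline a^*$ with $\mu(A_{\overline a^*}/\I) \geq p$.

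To produce $\overline a^*$, I would set $r_n = \Pr_{\omega \in \G_n,\, \overline a \in \{0,\dots,n-1\}^l}[\omega \models \varphi(\overline a)]$ and invoke Theorem~\ref{thrmnonstdanal} to transfer the density hypothesis to $\st(r_n) \geq p$. The $\M$-rational $r_n$ is, by definition, the internal average over $\overline a$ of $|A_{\overline a}|/|\G_n|$, so an internal pigeonhole inside $\M$ yields some $\overline a^* \in \{0,\dots,n-1\}^l$ with $|A_{\overline a^*}|/|\G_n| \geq r_n$. Passing to standard parts then gives
$$\mu(\bbl \varphi(\overline c_{\overline a^*}) \bbr) \;=\; \st\!\left(\frac{|A_{\overline a^*}|}{|\G_n|}\right) \;\geq\; \st(r_n) \;\geq\; p,$$
and combining with the earlier inequality yields $\mu(\bbl (\exists \overline x)\varphi(\overline x) \bbr) \geq p$. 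For the special case $p = 1$, this forces $\mu(\bbl (\exists \overline x)\varphi(\overline x) \bbr) = 1$, and since $\1 \in \B$ is the unique element of measure $1$, the truth value must equal $\1$.

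The one point requiring caution, which the averaging argument deliberately circumvents, is that one cannot in general compute the Boolean join $\Lor_{\overline \alpha} \bbl \varphi(\overline \alpha) \bbr$ as the internal union of the $A_{\overline \alpha}$ modulo $\I$: an internal union of sets of infinitesimal counting measure can have non-infinitesimal measure (the union of all singletons in $\{0,\dots,n-1\}$ being the obvious example). Selecting a single non-negligible constant witness is what sidesteps this, and is the only subtle step in the proof.
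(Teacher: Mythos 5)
Your proof is correct and takes essentially the same approach as the paper: an internal averaging argument in $\M$ produces a single tuple $\overline a^*$ with non-negligible witnessing probability, a constant-function tuple $\overline c_{\overline a^*}$ serves as a single witness below the Boolean join, and the standard-part computation yields the bound. The paper phrases the averaging step as a proof by contradiction while you invoke the maximizing tuple directly, but this is a cosmetic difference; your closing remark about why the naive union of the $A_{\overline\alpha}$ cannot replace the Boolean join is also accurate.
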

\begin{proof}
    Consider an array $C$ indexed by $\omega\in \G_n$ and $\overline a \in \{0,\dots,n-1\}^l$ such that
     \[C_{\omega,\overline a}=\begin{cases}
         1&\omega \models \varphi(\overline a)\\
         0&\text{otherwise.}
     \end{cases}\]

     By the assumption and induction in $\M$ we have that
     \[\st\left(\frac{1}{n^l\abs{\G_n}}\sum_{\omega\in\G_n}\sum_{\overline a}C_{\omega,\overline a}\right)\geq p.\]
     
     We now claim that there exists a specific $\overline b\in\{0,\dots,n-1\}^l$ such that $\st(\Pr_{\omega\in\G_n}[\omega \models \varphi(\overline b)])\geq p$. Assume for contradiction that the claim is false. Then
     \begin{align*}
        \frac{1}{\abs{\G_n}n^l}\sum_{\omega\in\G_n}\sum_{\overline a}C_{\omega,\overline \alpha}&=
        \frac{1}{n^l}\sum_{\overline a}\Pr_{\omega\in\G_n}[\omega\models \varphi(\overline a)]\\
        &\leq \Pr_{\omega\in\G_n}[\omega\models\varphi(\overline a_0)],
    \end{align*}
    where we pick\footnote{This is possible because $\M$ being a model of $\Th(\NN)$ satisfies induction.} $\overline a_0$ such that it maximizes $\Pr_{\omega\in\G_n}[\omega\models\varphi(\overline a_0)]$. But after taking the standard part of the inequality we obtain that
     \[\st\left(\frac{1}{n^l\abs{\G_n}}\sum_{\omega\in\G_n}\sum_{\overline a}C_{\omega,\overline a}\right)\leq \st(\Pr_{\omega\in\G_n}[\omega\models\varphi(\overline a_0)])<p,\]
    which is a contradiction and so the claim is true. Let $\overline\gamma_b$ be a tuple of constant functions which is at every sample equal to $\overline b$. We have 
    \begin{align*}
        \bbl(\exists \overline x)\varphi(\overline x)\bbr &= \Lor_{\overline \alpha \in U(F)^l}
        \bbl\varphi(\overline \alpha)\bbr\\
        &\geq \bbl\varphi(\overline \gamma_b)\bbr
    \end{align*}
    and by taking $\mu$ of this inequality we finally obtain that $\mu(\bbl(\exists \overline x)\varphi(\overline x)\bbr)\geq p$.
\end{proof}

The following example demonstrates that Theorem~\ref{thrmsuffuniv} cannot be generalized to a similar hypothesis as Theorem~\ref{thrmsuffexist}.

\begin{exam}
    Let $\G_k$ consist of all undirected graphs on the vertex set $\{0,\dots,k-1\}$ with exactly $\ceil{\frac{k(k-1)}{2\log(k)}}
    $edges. One can see that
    \[\lim_{k\to\infty}\Pr_{\substack{\omega\in \G_k\\x,y}}[\omega \models \lnot E(x,y)]=1,\]
    but in fact $\lim_{\Frud}\G_n\bbl(\forall x)(\forall y)\lnot E(x,y)\bbr=\0$.
    
    Let $t\in\M\setminus \NN$ such that $n^{1/t}$ is not bounded above by a standard number. Let $T$ be a tree which queries on all paths a fixed set of $n^{1/t}$ different potential edges. If we prove that any such set in $\G_n$ has to contain at least one edge with probability infinitesimally close to $1$ then we can construct $\Frud$-vertices $\alpha$ and $\beta$ using $T$ such that $\bbl E(\alpha,\beta)\bbr=\1$ by simply taking $T$ and labeling each leaf on a path which finds an edge with either the lesser vertex (in the natural order of $\M$) to compute $\alpha$, or with the greater vertex to compute $\beta$.

    Let $S$ be the set of potential edges queried by $T$ and let $m=\binom{n}{2}$. Now we have
    \begin{align*}
        \Pr_{\omega\in\G_n}[\text{$S$ contains no edge in $\omega$}]&=\frac{(m-n^{1/t})!(m-\ceil{\frac{m}{\log n}})!}{m!(m-\ceil{\frac{m}{\log n}}-n^{1/t})!}\\
        &=\prod_{i=0}^{n^{1/t}-1}\frac{m-\ceil{\frac{m}{\log n}}-i}{m-i}\\
        &\leq \left(1-\frac{\ceil{\frac{m}{\log n}}}{m}\right)^{n^{1/t}}\\
        &\leq \left(1-\frac{1}{2\log n}\right)^{n^{1/t}},
    \end{align*}
    the standard part of which can be, using Theorem~\ref{thrmexpo}, for all $k\in \NN$ bounded above by 
    \begin{align*}
        \st\left(\left(1-\frac{1}{2\log n}\right)^{k\cdot 2 \log n}\right)&=e^{-k},
    \end{align*}
    which tends to $0$ as $k\to\infty$.
\end{exam}
    
For more examples, we point the interested reader to~\cite{jezil2022thesis}.

\section{Expanding Wide Limits to Two-Sorted Arithmetic}\label{secmodel}

In this section, we will show that under reasonable assumptions one can embed the wide limit into the original models of Krajíček in such a way that first order statements, after a suitable translation, keep their truth values.

\subsection{The structures $K(F,G)$}

We will now recall the construction of two-sorted models of weak arithmetic $K(F,G)$ defined in~\cite[Chapter 5]{krajicek2010forcing}. We will take the liberty to define them as an extension of the definition of a wide limit to obtain structures $K(\G_n,F,G)$~\footnote{This notation is just making some parameters of the construction explicit, the models constructed can be obtained by the original method without first constructing the wide limit. Our contribution is in observing that the truth values of first order sentences concerning the wide limit is preserved between the wide limit and the structure $K(\G_n,F,G)$.}. Under the right conditions, these result in a structure in some sublanguage of $L_{all}$ with two sorts: numbers and functions of bounded domain on numbers, and this latter sort contains the wide limit as an object. These sorts are to be named number sort and set sort, as the bounded functions can be interpreted as sets (or more generally relations) and every such function $f$ can be coded by a bounded set $\{(x,i); \text{ The $i$-th bit of $f(x)$ is $1$.}\}$.

\begin{defi}
    Let $L\subseteq L_{all}$. This determines a \emph{language $L^{2}$} which we get by keeping the original variables as number sort variables, adding to $L$ set sort variables $X,Y,\dots$ whose intended interpretation are bounded functions and the equality symbol for set sort variables (denoted the same as for the number sort). All set sort variables are treated as function symbols and can form terms with the number sort terms as arguments.
\end{defi}

We will also use the function sort variables as relation symbols, and we define the atomic formula $X(x_0,\dots,x_{k-1})$ to be evaluated the same as the formula \[X(x_0,\dots,x_{k-1})\neq 0.\] We will now fix a wide sequence $\G_k$ and a family of random variables $F$ on $\G_n$ which together determine a wide limit $\lim_{F}\G_n$.

\begin{defi}
    We define $\M_n\subseteq \M$ to be the subset of $\M$ consisting of all numbers bounded above by $2^{n^{1/t}}$ for some $t\in\M\setminus\NN$. 
\end{defi}
\begin{defi}
    We define $L_n\subseteq L_{all}$ to contain all relation symbols from $L_{all}$ and all functions from $L_{all}$ for which their values on any tuple of elements of $\M_n$ is still in $\M_n$. We say $F$ is $L_n$-closed if for every function symbol $f\in L_n$ of arity $k$ and every $\alpha_0,\dots,\alpha_{k-1}\in F$ we have that $f(\alpha_{0},\dots,\alpha_{k-1})\in F$.

Note that $\M_n$ is then a substructure of the $L_n$-reduct of $\M$.
\end{defi}

\begin{defi}
    We say that $G$ is a \emph{family of random functions} (on $\G_n$) if for every $\Theta\in G$ there is $k\in\NN$ such that $\Theta$ assigns to each $\omega\in\G_n$ a $k$-ary function $\Theta_\omega$ coded by an element in $\M$ which maps a set $\dom(\Theta_\omega)\subseteq \M_n$ into $\M_n$. Such $\Theta$ is then called $k$-ary.

    We say $G$ is $F$-compatible if for every $\alpha_0,\dots, \alpha_{k-1}\in F$, and a $k$-ary $\Theta \in G$ we have that the function
    $\Theta(\alpha_0,\dots,\alpha_{k-1})$ defined as
    \[\Theta(\alpha)(\omega)=\begin{cases}
        \Theta_\omega(\alpha_0(\omega),\dots,\alpha_{k-1}(\omega))&\text{if $(\alpha_0(\omega),\dots,\alpha_{k-1}(\omega))\in \dom(\Theta_\omega)$}\\
        0&\text{otherwise,}
    \end{cases}\]
    is in fact in $F$.
\end{defi}

An example of a specific family of random functions will be provided in Section~\ref{secsepwphp}.

\begin{defi}
    Let $F$ be an $L_n$-closed family of random variables with values in $\M_n$. Let $G$ be an $F$-compatible family of random functions. We define $K(\G_n,F,G)$ to be a $\B$-valued $L_n^2$-structure with the number sort of the universe as $F$ and the set sort of the universe as $G$. The valuation of formulas is then given by the following inductive definition. We define the valuation only for terms consisting of variables substituted with elements of $F$, and because of the $L_n$-closedness and $F$-compatibility of $G$, we can evaluate more complex terms step-by-step, eventually reaching an element of $F$.
    \begin{itemize}
        \item $\bbl\alpha = \beta\bbr=\{\omega\in\G_n;\alpha(\omega)=\beta(\omega)\}/\I$, where $\alpha,\beta\in F$
        \item $\bbl R(\alpha_0,\dots,\alpha_{k-1})\bbr=\{\omega\in\G_n;\M_n\models R(\alpha_0(\omega),\dots,\alpha_{k-1}(\omega))\}/\I$, where the elements $\alpha_0,\dots,\alpha_{k-1}$ are from $F$ and $R$ is a relation symbol in $L_n$
        \item $\bbl\Theta=\Xi\bbr=\{\omega\in\G_n;\Theta_\omega=\Xi_\omega\}/\I$, where $\Theta,\Xi\in G$
        \item $\bbl-\bbr$ commutes with $\lnot$, $\land$ and $\lor$ 
        \item $\bbl(\forall x)A(x)\bbr=\Land_{\alpha\in F}\bbl A(\alpha)\bbr$
        \item $\bbl(\exists x)A(x)\bbr=\Lor_{\alpha\in F}\bbl A(\alpha)\bbr$
        \item $\bbl(\forall X)A(X)\bbr=\Land_{\Theta\in G}\bbl A(\Theta)\bbr$
        \item $\bbl(\exists X)A(X)\bbr=\Lor_{\Theta\in G}\bbl A(\Theta)\bbr$.
    \end{itemize}
\end{defi}

Let us note that in general, it is possible that extensionality is not valid in $K(\G_n,F,G)$.

\subsection{Preservation of sentences concerning the wide limit}

We will now prove (under a mild condition on $F$) that there is a set sort object in $K(\G_n,F,G)$ which faithfully encodes the wide limit $\lim_F\G_n$. This lets us construct models in which an object with the same first order properties as the wide limit might be desired. Recall that every element $\Theta \in G$, where $G$ is a family of random functions, actually determines a predicate which we evaluate the same as the formula $\Theta(\alpha_0,\dots,\alpha_{k-1})\neq 0$.

\begin{defi}\label{defirepre}
    Let $G$ be a family of random functions. We say that the edge relation of the wide limit $\lim_{F}\G_n$ is \emph{represented in $G$ by $\Gamma$} if $\Gamma\in G$ is binary and for all $\alpha,\beta\in U(F)$ we have that
\[K(\G_n,F,G)\bbl\Gamma(\alpha,\beta)\bbr=\lim_{F}\G_n\bbl E(\alpha,\beta)\bbr.\]
\end{defi}

\begin{defi}\label{defirestri}
    We say a family of random variables $F$ has \emph{restrictable ranges} if for every $\alpha\in F$ and $m\in\M_n$ there is $\tilde \alpha_m \in F$ such that
    \[\tilde \alpha_m(\omega) = \begin{cases}
        \alpha(\omega) & \alpha(\omega)<m\\
        0&\text{otherwise.}
    \end{cases}\]
\end{defi}

\begin{thrm}\label{thrmtrans}
   Let $\varphi$ be a $\{E\}$-sentence. Let $F$ be $L_n$-closed and have restrictable ranges and let $G$ be $F$-compatible. Let the edge relation of the wide limit $\lim_{F}\G_n$ be represented in $G$ by $\Gamma$. We define $\tilde \varphi(\Gamma)$ to be the $L_n^2$-sentence obtained by replacing all the occurrences of the relation symbol $E$ by $\Gamma$, keeping the structure of the logical connectives and replacing all quantifiers $(\forall x)(\dots)$ by $(\forall x)(x<n\to(\dots))$ and $(\exists x)(\dots)$ by $(\exists x)(x<n \land \dots)$.

   Then we have that
   \[\lim_{F}\G_n\bbl\varphi\bbr=K(\G_n,F,G)\bbl\tilde \varphi(\Gamma)\bbr.\]
\end{thrm}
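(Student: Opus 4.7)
The natural approach is structural induction on the complexity of the formula $\varphi$, strengthening the statement to handle open subformulas with parameters. Specifically, I would prove by induction on any $\{E\}$-formula $\psi(x_1,\dots,x_k)$ that for every tuple $\alpha_1,\dots,\alpha_k \in U(F)$,
\[\lim_F \G_n \bbl \psi(\alpha_1,\dots,\alpha_k) \bbr = K(\G_n,F,G) \bbl \tilde\psi(\Gamma)(\alpha_1,\dots,\alpha_k) \bbr.\]
The original theorem is the case $k=0$.

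The atomic cases are immediate: equality is evaluated by the same formula in both structures (the definitions $\bbl \alpha = \beta \bbr = \{\omega : \alpha(\omega)=\beta(\omega)\}/\I$ coincide verbatim), and $\bbl E(\alpha,\beta)\bbr = \bbl \Gamma(\alpha,\beta)\bbr$ holds by Definition~\ref{defirepre} since $\alpha,\beta \in U(F)$. The Boolean connective cases follow from the fact that $\bbl - \bbr$ commutes with $\lnot,\land,\lor$ in both structures.

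The quantifier step is the genuine content of the theorem, and restrictable ranges is exactly the hypothesis needed to make it go through. For existential quantifiers I must show
\[\Lor_{\alpha \in U(F)} \bbl \psi(\alpha) \bbr_{\lim} \;=\; \Lor_{\alpha \in F}\bbl \alpha < n \land \tilde\psi(\Gamma)(\alpha)\bbr_{K}.\]
The inequality $(\leq)$ is easy: any $\alpha \in U(F)$ satisfies $\bbl \alpha < n \bbr = \1$, so each join on the left appears as a term on the right (using the inductive hypothesis to identify $\bbl\psi(\alpha)\bbr_{\lim}$ with $\bbl\tilde\psi(\Gamma)(\alpha)\bbr_K$). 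For $(\geq)$, given arbitrary $\alpha \in F$, I use the restrictable range hypothesis (Definition~\ref{defirestri}) to obtain $\tilde\alpha_n \in F$ which agrees with $\alpha$ on the sample set $\{\omega : \alpha(\omega) < n\}$ and takes value $0$ elsewhere; in particular $\tilde\alpha_n \in U(F)$. Because $\alpha$ and $\tilde\alpha_n$ agree on $\{\omega : \alpha(\omega)<n\}$, one has
\[\bbl \alpha < n \land \tilde\psi(\Gamma)(\alpha)\bbr_K \;=\; \bbl \alpha < n \land \tilde\psi(\Gamma)(\tilde\alpha_n)\bbr_K \;\leq\; \bbl \tilde\psi(\Gamma)(\tilde\alpha_n)\bbr_K,\]
which by the inductive hypothesis equals $\bbl \psi(\tilde\alpha_n)\bbr_{\lim}$, a term of the join on the left. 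The universal case is dual, using $\bbl \alpha < n \to \tilde\psi(\Gamma)(\alpha)\bbr_K = \lnot \bbl \alpha < n \land \lnot \tilde\psi(\Gamma)(\alpha)\bbr_K$ and the same restriction trick, or by a direct symmetric argument.

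The main obstacle, and the only non-formal step, is precisely this handling of quantifiers: the wide limit's quantifiers range over $U(F)$ while $K(\G_n,F,G)$'s range over all of $F$, and without restrictable ranges there would be no way to coerce an arbitrary $\alpha \in F$ into an $F$-vertex while preserving its behavior on the relevant sample set. Once the restriction $\tilde\alpha_n$ is in hand, the argument is a clean chain of Boolean identities using that $\mathcal{B}$ is a complete Boolean algebra and the inductive hypothesis. I should also note that $\tilde\psi(\Gamma)$ is well-formed in $L_n^2$ because all terms built from $F$-elements and $L_n$-functions stay in $F$ by $L_n$-closedness, and $\Gamma$ applied to $F$-tuples gives an element of $F$ by $F$-compatibility, so the inductive computation of truth values inside $K(\G_n,F,G)$ is legitimate.
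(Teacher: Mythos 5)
Your proof is correct and follows essentially the same route as the paper's: the strengthened inductive hypothesis over open subformulas with $U(F)$-parameters, the immediate atomic and connective cases, and the restriction-to-$U(F)$ trick via restrictable ranges for the quantifier step. The only cosmetic difference is that you carry out the existential quantifier directly and take the universal as its dual, while the paper does the reverse; both rest on the same key identity $\bbl\alpha<n\bbr\land\bbl\tilde\psi(\Gamma)(\alpha)\bbr_K=\bbl\alpha<n\bbr\land\bbl\tilde\psi(\Gamma)(\tilde\alpha_n)\bbr_K$, which the paper derives syntactically from $K(\G_n,F,G)\bbl\beta<n\to\beta=\tilde\beta_n\bbr=\1$ and soundness of substitution-of-equals, and you state semantically. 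Your observation that $\tilde\alpha_n\in U(F)$ (so every $\alpha\in F$ is ``coerced'' to an $F$-vertex) and that $\bbl\alpha<n\bbr=\1$ for $\alpha\in U(F)$ are exactly the two facts the paper uses as well.
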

\begin{proof}
    We will prove that for all $\{E\}$-formulas $\varphi(\overline x)$ and all $\overline \alpha\in U(F)$ we have that
   \[\lim_{F}\G_n\bbl\varphi(\overline \alpha)\bbr=K(\G_n,F,G)\bbl\tilde \varphi(\Gamma,\overline \alpha)\bbr.\]
   
   We proceed by induction on the complexity of the formula. The case for atomic formulas follows from Definition~\ref{defirepre} and the step for logical connectives is also clear since $\bbl-\bbr$ commutes with them. With the induction step for negation in hand it is now enough to prove the induction step for the universal quantifier. 
   
   We now assume that the statement holds for a formula $\varphi(y,\overline x)$. By the restrictability of ranges in $F$ we get that for all $\beta \in F$ there is $\tilde \beta_n \in U(F)$ such that 
    \[\tilde \beta_n(\omega) = \begin{cases}
        \beta(\omega) & \beta(\omega)<n\\
        0&\text{otherwise.}
    \end{cases}\]
   We have that for every $\beta\in F$:
   \[K(\G_n,F,G)\bbl\beta <n \to \beta = \tilde \beta_n\bbr=\1\]
   and by the validity of predicate logic
   \[K(\G_n,F,G)\bbl\beta <n \to (\tilde \varphi(\Gamma,\beta,\overline \alpha) \equiv \tilde \varphi(\Gamma,\tilde \beta_n,\overline \alpha))\bbr=\1,\]
   which together implies
   \[K(\G_n,F,G)\bbl\tilde \varphi(\Gamma,\tilde \beta_n,\overline \alpha) \to (\beta<n \to \tilde \varphi(\Gamma,\beta,\overline \alpha))\bbr = \1,\]
   which can be rewritten as
   \[K(\G_n,F,G)\bbl\tilde \varphi(\Gamma,\tilde \beta_n,\overline \alpha)\bbr\leq K(\G_n,F,G)\bbl \beta<n \to \tilde \varphi(\Gamma,\beta,\overline \alpha)\bbr.\tag{\dag}\]
   
   Moreover, for every $\gamma \in U(F)$ we have $\tilde \gamma_n = \gamma$ and thus every element of $U(F)$ is of the form $\tilde \beta_n$ for some $\beta$.
   
   \textbf{Claim:} For all $\overline \alpha\in U(F)$ we have:
   \begin{align*}
       \Land_{\beta\in F}K(\G_n,F,G)\bbl \beta<n \to \tilde \varphi(\Gamma, \beta,\overline \alpha)\bbr
       &=\Land_{\beta \in F} K(\G_n,F,G)\bbl \tilde \varphi(\Gamma, \tilde \beta_n,\overline \alpha)\bbr
   \end{align*}
   From $(\dag)$ we obtain
  \[
       \Land_{\beta\in F}K(\G_n,F,G)\bbl \beta<n \to \tilde \varphi(\Gamma, \beta,\overline \alpha)\bbr
       \geq\Land_{\beta \in F} K(\G_n,F,G)\bbl \tilde \varphi(\Gamma, \tilde \beta_n,\overline \alpha)\bbr.
    \] 
    For the second direction, using the fact that $U(F)$ contains exactly all $\tilde \beta_n$, we have
    \begin{align*}
       \Land_{\beta\in F}K(\G_n,F,G)\bbl \beta<n \to \tilde \varphi(\Gamma, \beta,\overline \alpha)\bbr &\leq
       \Land_{\beta\in U(F)}K(\G_n,F,G)\bbl \beta<n \to \tilde \varphi(\Gamma, \beta,\overline \alpha)\bbr \\
       &= \Land_{\beta\in F}K(\G_n,F,G)\bbl \tilde\beta_n<n \to \tilde \varphi(\Gamma, \tilde\beta_n,\overline \alpha)\bbr \\
       &= \Land_{\beta\in F}K(\G_n,F,G)\bbl \tilde \varphi(\Gamma, \tilde\beta_n,\overline \alpha)\bbr,
    \end{align*}
    this proves the claim.
    
   With the claim established, we can finish the inductive step for the universal quantifier. Again using that $U(F)$ consists exactly of all $\tilde \beta_n$ and the induction hypothesis, we have that for all $\overline\alpha \in U(F)$:
   \begin{align*}
       K(\G_n,F,G)\bbl(\forall y<n)\tilde \varphi(\Gamma,y,\overline \alpha)\bbr&=\Land_{\beta\in F}K(\G_n,F,G)\bbl \beta<n \to \tilde \varphi(\Gamma, \beta,\overline \alpha)\bbr\\
       &=\Land_{\beta \in F} K(\G_n,F,G)\bbl \tilde \varphi(\Gamma, \tilde \beta_n,\overline \alpha)\bbr\\
       &=\Land_{\gamma \in U(F)} K(\G_n,F,G)\bbl \tilde \varphi(\Gamma, \gamma,\overline \alpha)\bbr\\
       &=\Land_{\gamma \in U(F)} \lim_F \G_n\bbl\varphi(\gamma,\overline \alpha)\bbr\\
       &=\lim_{F}\G_n\bbl(\forall y)\varphi(y,\overline \alpha)\bbr. \qedhere
   \end{align*}
\end{proof}

\section{Total NP search problems and Wide Limits}\label{sectfnp}

In this section we recall the class of search problems called Total NP Search problems, first defined in~\cite{PAPADIMITRIOU1994498}, and then take a wide limit of all instances of $\WeakPigeon$ and show that when it is intepreted as an instance, it has no solution.

\subsection{Preliminaries on Total NP Search problems}

\begin{defi}
    A total NP search problem $P(x,y)$ is a relation on binary strings for which the following two conditions are satisfied.
    \begin{itemize}
        \item \emph{Verifiability in polynomial time:} There exists a Turing machine $M$ deciding whether $P(x,y)$ in time polynomial in the lengths of $x$ and $y$.
        \item \emph{Totality:} There is a $c\in \NN$ such that for every $x\in\{0,1\}^*$ there is $y\in\{0,1\}^*$, $\abs{y}$ is at most $(\abs{x}+2)^c$ and $P(x,y)$.
    \end{itemize}

    The class of all such search problems is denoted $\TFNP$.
\end{defi}

There are many well-known $\TFNP$ problems, some of which were considered already in~\cite{PAPADIMITRIOU1994498}, such as $\LEAF$, which intuitively amounts to the task:
\begin{center}
    ``Given a graph $\omega$ with an odd-degree vertex, find another.''
\end{center} 
or $\Pigeon$ intuitively described as:
\begin{center}
 ``Given a map $f:[2^k] \to [2^k-1]$, find distinct $x$ and $y$ such that $f(x)=f(y)$.''   
\end{center}
 where $[m]$ denotes the set $\{0,\dots,m-1\}$. The graph $\omega$ in the first problem is given by a circuit computing potential neighbors for each vertex, and the function $f$ in the second problem is given by a circuit computing its values. This makes the problems non-trivial as checking the whole $\omega$ or $f$ may take exponential time in the size of the given circuit.

More relevant for our setting is the variant of the class called \emph{black-box $\TFNP$}, originally defined as `type $2$ $\TFNP$' in~\cite{beame1995rel}. This variant allows the circuits in the input to be replaced by an oracle, for example, in the problem $\LEAF$ we instead obtain oracle access to the neighbor-sets of $G$ or in the problem $\Pigeon$ to the values of the function $f$. We will start by defining a query tree, which is a natural computation model generalizing an oracle Turing machine. Query trees capture exactly the relationship between the values of the oracle and the computational states of the model. Usually such trees operate on relational oracles, see for example~\cite{goos2024separations}, but in our setting function oracles are more natural. \footnote{Moreover, function oracles, with values polynomially bounded in the size of the input, can be simulated by relational ones which can be queried for $i$-th bit of the given function value. Every query tree operating on function oracle then can be transformed to a tree operating on relational oracle with at most polynomially larger depth.}

\begin{defi}
    Let $k_q,k_a,k_o>0$. A $(k_q,k_a,k_o)$-query tree $T$ is a labeled rooted tree of the following shape:
    \begin{itemize}
        \item Each non-leaf node is labeled by a binary string of length $k_q$.
        \item Each non-leaf node has for every $w\in\{0,1\}^{k_a}$ an outgoing edge labeled by $w$.
        \item Each leaf node is labeled by a binary string of length $k_o$.
    \end{itemize}

    The depth of a tree $T$ is the length of the longest path starting at the root. If $\O_{k_q}$ is a function which maps $\{0,1\}^{k_q}$ to $\{0,1\}^{k_a}$, then the \emph{computation} of $T$ on $\O_{k_q}$ is the path obtained by starting at the root and continuing down the tree from each non-leaf node with some label $l$ to the next node along the edge labeled by $\O_{k_q}(l)$, the \emph{output} of the computation $T(\O)$ is then simply the label of the leaf met on the computation.
\end{defi}

For a function oracle $\O$ and a number $k$ we denote the restriction of $\O$ to $\{0,1\}^{k}$ as $\O_{k}$.

\begin{defi}
    A total query NP search problem $P(\O,x,y)$ is a relation, where $\O$ is a function oracle, $x$ and $y$ are binary strings, along with two functions $\ell_a$ and $\ell_s$ such that the following three conditions are satisfied.
    \begin{itemize}
        \item \emph{Conditions on lengths:} The functions $\ell_a$ and $\ell_s$ are polynomial time when the input is given in unary. For every $k$, we have $\O_k:\{0,1\}^k\to\{0,1\}^{\ell_a(k)}$ and for every $x,y\in\{0,1\}^*$ we have that $P(\O,x,y)$ implies $\abs{y}=\ell_s(\abs{x})$.
        \item \emph{Verifiability in polynomial depth:} There is a polynomial $p$, and for any binary strings $x$ and $y$ there exists a query tree $T_{x,y}$ of depth at most $p(\abs{x})$, such that for every $\O$ we have $T_{x,y}(\O)=1$ if and only if $P(\O,x,y)$. 
        \item \emph{Totality:} For each $x\in\{0,1\}^*$ there is $y\in\{0,1\}^{\ell_s(\abs{x})}$ such that $P(\O, x,y)$.
    \end{itemize}

    The pair $(\O,x)$ satisfying the conditions on lengths is called \emph{an instance of} $P$, and the string $x$ is called \emph{the size parameter}. The class of all such search problems is denoted $\TFNP^{dt}$.
\end{defi}

We will be analyzing the following two $\TFNP^{dt}$ problems. 

\begin{defi}
    The problem $\WeakPigeon$ is given as follows. Given $x$ and a function oracle $\O$ with $\O_{\abs{x}}$ $:\{0,1\}^{\abs{x}}\to\{0,1\}^{\abs{x}-1}$ find distinct $x_1,x_2\in\{0,1\}^{\abs{x}}$ such that $\O(x_1)=\O(x_2)$.
\end{defi}

This problem is total because the size of the domain on a given length is larger than the size of the codomain. Let us now assume, that the input of a $\TFNP^{dt}$ problem can be given by multiple function oracles, since these oracles can be represented as a single oracle computing all their values in parallel.

\begin{defi}
    The problem $\OntoWeakPigeon$ is given as follows. Given $x$ and function oracles $\O^f$ and $\O^g$ such that \[\O^f_{\abs{x}}:\{0,1\}^{\abs{x}} \to \{0,1\}^{\abs{x}-1} \qquad \text{ and }\qquad \O^g_{\abs{x}-1}:\{0,1\}^{\abs{x}-1}\to\{0,1\}^{\abs{x}}\] 
    find $x'\in\{0,1\}^{\abs{x}}$ satisfying that $\O^g(\O^f(x'))\neq x'$.
\end{defi}

The problem is total as a consequence of totality of $\WeakPigeon$, if we have distinct $x_1,x_2$ such that $\O^f(x_1)=\O^f(x_2)$ then one of them have to already be a solution to $\WeakPigeon$. This observation can be made precise using the notation of a many-one reduction. These reductions originally used oracle Turing machines (see~\cite{beame1995rel}), but as we already replaced oracle Turing machines by query trees we shall modify the definition accordingly.

\begin{defi}
    Let $P$ and $Q$ be $\TFNP^{dt}$ problems such that the length functions for $P$ are $\ell^P_a$ and $\ell^P_s$ and for $Q$ they are $\ell^Q_a$ and $\ell^Q_s$. We say that $P$ is many-one reducible to $Q$, denoted $P\leq_m Q$, if there is a function $r:\{0,1\}^*\to\{0,1\}^*$ computed in polynomial-time and for each $x\in\{0,1\}^*$, there is a sequence of $(\abs{x},\ell^P_a(\abs{x}), \ell^Q_a(\abs{r(x)}))$-query trees $(T^I_{x'})_{x'\in\{0,1\}^{\abs{r(x)}}}$ and a sequence of $(\abs{x},\ell^P_a(\abs{x}),\ell^P_s(\abs{x}))$-query trees $(T^s_{y'})_{y'\in\{0,1\}^{\ell^Q_s(\abs{r(x)})}}$, such that the following is satisfied:

    For every instance $(\O,x)$ of $P$, let $\O'$ denote the function oracle satisfying that $\O'_{\abs{r(x)}}(x')$ is given by the value $T^I_{x'}(\O)$. Then, for every $y'\in\{0,1\}^{\ell^Q_s(\abs{r(x)})}$ satisfying $Q(\O',r(x),y')$ we have $P(\O,x,T^s_{y'}(\O))$.
\end{defi}

It is easy to check that $\OntoWeakPigeon \leq_m \WeakPigeon$. Regarding the other direction, it follows from known results (see Section~\ref{secconc} for details) that \[\WeakPigeon \not \leq_m \OntoWeakPigeon,\] we will give a new proof of this using wide limits in the rest in the remainder of Section~\ref{sectfnp} and Section~\ref{secsepwphp}.

\subsection{The wide limit of all instances of $\WeakPigeon$}

The following wide sequence essentially consists of all instances of $\WeakPigeon$. We will show, that relative to trees of subexponential depth which are allowed to ask for neighbors of vertices, the wide limit will be a graph of an injective function.

\begin{defi}
    Let $M_{k,\floor{k/2}}$ be a wide sequence consisting of all $\{E\}$-structures on $\{0,\dots, k-1\}$, where $E$ is a graph of a function from $\{0,\dots,k-1\}$ to $\{0,\dots,\floor{k/2}-1\}$.
\end{defi}

Note, that for a fixed $\omega\in\Mnh$ the neighbor of $i\in\{0,\dots,n-1\}$ in $\omega$ is simply the image of $i$ in the function whose graph is $\omega$.

\begin{defi}
    We define $\Tnb$ as the set of all labeled rooted trees of the following shape:
    
    \begin{itemize}
    \item Each non-leaf node is labeled by some $v\in\{0,\dots,n-1\}$. 
    \item For each $u \in \{0,\dots, n-1\}$ and a node $a$ there is an outgoing edge from $a$ labeled $u$.
    \item Each leaf is labeled by some $m\in \M_n$.
    \item The depth of the tree is defined as the maximal number of edges in a path from the root, and we require it is at most $n^{1/t}$ (rounded to the nearest element of $\M$) for some $t\in\M\setminus \mathbb{N}$.
    \end{itemize}

    The \emph{computation} of such a tree in $\Tnb$ on $\omega\in M_{n,\floor{n/2}}$ is defined as follows. We build a path by starting at the root and interpreting every node labeled by some $v$ as a question `what is the neighbor of the vertex $v$?' and we follow the output edge with the answer and continue analogously until we find a leaf. The label of the leaf is defined to be the output of the computation.

    We define $\Fnb$ to be the set of all functions on $\Mnh$ which are computed by some $T\in\Tnb$. Note that the depth of the trees is subexponential in $\abs{n}$.
\end{defi}

\begin{defi}
    We say a tree $T\in \Tnb$ \emph{fails} on $\omega\in \Mnh$ if on the computation path the tree $T$ the neighbors of all distinct vertices are distinct, and if $T(\omega)\in\{0,\dots,n-1\}$ then also the neighbor of $T(\omega)$ in $\omega$ is disjoint from the other neighbors.
\end{defi}

\begin{lemm}\label{lemmabirthdayparadox}
    Let $T\in\Tnb$, then
    \[\st\left(\Pr_{\omega\in M_{n,\floor{n/2}}}[T\text{ fails}]\right) = 1.\]
\end{lemm}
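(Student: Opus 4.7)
My plan is a birthday-paradox style union bound. Any $T \in \Tnb$ of depth at most $n^{1/t}$ can examine the $\omega$-values of at most $n^{1/t}+1$ distinct vertices on any single computation---the queried vertices on the path plus possibly the output if it happens to lie in $\{0,\dots,n-1\}$---and under the uniform distribution on $\Mnh$ these values are i.i.d.\ uniform on the $\floor{n/2}$-element codomain. So with probability infinitesimally close to $1$ they are pairwise distinct, which is exactly the condition that $T$ fails.

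Formally, I would fix $T$ of depth $d \leq n^{1/t}$ and, for each $\omega \in \Mnh$, let $V^+(\omega)$ denote the set of distinct vertex labels on the non-leaf nodes of the computation path, together with $T(\omega)$ when $T(\omega) \in \{0,\dots,n-1\}$. Then $|V^+(\omega)| \leq d+1$ and $T$ fails on $\omega$ precisely when $\omega|_{V^+(\omega)}$ is injective. The main step is to simulate $T$ on $\omega$ while revealing the values $\omega(v)$ one at a time as each new vertex is queried (and finally $\omega(T(\omega))$ at the end if that vertex is still fresh). Since $\omega$ has the product uniform distribution on $\Mnh$, each such \emph{fresh} reveal is uniform on $\{0,\dots,\floor{n/2}-1\}$ and independent of everything revealed earlier, while a repeat query contributes no new randomness. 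A union bound over the at most $\binom{d+1}{2}$ pairs of fresh reveals, each colliding with probability exactly $1/\floor{n/2}$ by independence, gives
\[\Pr_{\omega \in \Mnh}[T \text{ does not fail}] \leq \binom{d+1}{2}/\floor{n/2} \leq (n^{1/t}+1)^2/(n-1).\]

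To conclude, any nonstandard $t$ satisfies $t > 3$, so the displayed bound is at most $(n^{1/3}+1)^2/(n-1)$, whose standard part equals $\lim_{k \to \infty} (k^{1/3}+1)^2/(k-1) = 0$ by Theorem~\ref{thrmnonstdanal}. Hence $\st(\Pr_\omega[T \text{ fails}]) = 1$, as required. I expect the only delicate step is justifying the stepwise-reveal calculation carefully in the nonstandard setting---in particular, keeping track of the fact that the number of fresh reveals is itself random (bounded by $d+1$) without disturbing the independence of conditional distributions. An alternative that sidesteps this is to sum directly over root-to-leaf paths $p$ in $T$, exactly enumerating $\{\omega : \omega \text{ follows } p\}$ via the freedom at vertices outside $V_p$ and counting pairs of matching edge labels on $V_p^+$; but the stepwise version seems cleaner and maps more transparently onto the standard birthday calculation.
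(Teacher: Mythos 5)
Your proof is correct and follows essentially the same birthday-paradox argument as the paper: bound the probability that the at most $d+1$ revealed $\omega$-values collide, using that under the uniform distribution on $\Mnh$ the values are i.i.d.\ uniform on a $\floor{n/2}$-element set, and then take standard parts via Theorem~\ref{thrmnonstdanal}. The only cosmetic difference is that the paper writes the failure probability as the exact product $\prod_{i=1}^{d}(1-i/\floor{n/2})$ and lower-bounds it with Bernoulli's inequality to get $1-d^2/\floor{n/2}$, whereas you upper-bound the collision probability directly by a union bound over $\binom{d+1}{2}$ pairs; both yield the same $O(d^2/n)$ estimate, and your concern about the random number of fresh reveals is a non-issue since the union bound over all $\binom{d+1}{2}$ position-pairs is valid regardless.
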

\begin{proof}
    By direct computation, we have that the probability of failure of tree of depth $d$ is
    \begin{align*}
        \Pr_{\omega\in M_{n,\floor{n/2}}}[T\text{ fails}] = \prod_{i=1}^d \left (1-\frac{i}{\floor{n/2}} \right) \geq \left(1-\frac{d}{\floor{n/2}}\right)^d \geq 1-\frac{d^2}{\floor{n/2}},
    \end{align*}
    where the last inequality follows from Theorem~\ref{thrmbrnl}. Since the depth of any tree in $\Tnb$ is bounded by $n^{1/t}$, for some $t\in\M\setminus \NN$, we have that the lower bound is at least $1-((n^{2/t})/\floor{n/2})$, which is infinitesimally close to $1$.
\end{proof}

\begin{thrm}\label{thrmwlcoll}
    \[\lim_{\Fnb}\Mnh \bbl (\exists x)(\exists y)(\exists z)(x\neq y \land E(x,z)\land E(y,x))\bbr= \0\]
\end{thrm}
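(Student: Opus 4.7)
The plan is to reduce to Lemma~\ref{lemmabirthdayparadox}. By the quantifier clause of the wide limit the sentence unfolds as $\Lor_{\alpha,\beta,\gamma\in U(\Fnb)}$ of the corresponding quantifier-free instance, so it suffices to show that every such disjunct already equals $\0$. Fix $\Fnb$-vertices $\alpha,\beta,\gamma$ computed by trees $T_\alpha,T_\beta,T_\gamma\in\Tnb$; taking $t$ to be the minimum of their three depth parameters (which is still nonstandard), all three trees have depth at most $n^{1/t}$.

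The core construction is a single tree $T^\ast$ which runs $T_\alpha,T_\beta,T_\gamma$ sequentially along every branch and then, at every leaf, appends two further queries asking for the neighbors of the values $\alpha(\omega)$ and $\beta(\omega)$ (well-defined because $\alpha,\beta\in U(\Fnb)$ take values in $\{0,\dots,n-1\}$ and these values are determined by the path at that point). The depth of $T^\ast$ is at most $3n^{1/t}+2$; since $t$ is nonstandard, by shrinking $t$ if necessary we may assume $n^{1/t}\geq 4$, so
\[
3n^{1/t}+2\leq n^{2/t}=n^{1/(t/2)}
\]
with $t/2$ still nonstandard. Hence $T^\ast\in\Tnb$ and Lemma~\ref{lemmabirthdayparadox} applies.

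The key point is that on any $\omega$ on which $T^\ast$ fails (in the sense preceding the lemma) all pairs of distinct vertices queried on its computation path receive pairwise distinct neighbors in $\omega$; in particular the two last queries $\alpha(\omega)$ and $\beta(\omega)$ are among them, so whenever $\alpha(\omega)\neq\beta(\omega)$ their neighbors in $\omega$ also differ and cannot both coincide with $\gamma(\omega)$, which would be precisely what the atomic witness inside the existential demands. Contrapositively, the set of $\omega$ realizing the witness conjunction is contained in the set on which $T^\ast$ does not fail, and Lemma~\ref{lemmabirthdayparadox} tells us that the latter has counting measure infinitesimally close to $0$; it therefore lies in $\I$, and the Boolean value of the disjunct equals $\0$ as desired.

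The only mildly delicate ingredient is the depth bookkeeping for $T^\ast$, which is entirely absorbed by the slack ``depth at most $n^{1/t}$ for some $t\in\M\setminus\NN$'' in the definition of $\Tnb$. The remainder of the argument is just the birthday-paradox intuition that a shallow decision tree cannot detect a collision in a uniformly random near-surjection, cast through the inclusion of events above.
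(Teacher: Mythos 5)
Your proposal is correct and takes essentially the same route as the paper: both arguments compose the trees computing $\alpha,\beta,\gamma$ into a single tree of $\Tnb$ that additionally queries the neighbor(s) of the computed values, observe that any sample witnessing $\alpha\neq\beta\land E(\alpha,\gamma)\land E(\beta,\gamma)$ prevents that tree from failing, and then conclude from Lemma~\ref{lemmabirthdayparadox} that the witnessing event has infinitesimal measure. The only differences are cosmetic: the paper argues by contradiction and uses the output-label clause of the failure definition (appending one neighbor query and letting the output be $\beta(\omega)$), whereas you show each disjunct is $\0$ directly and append explicit neighbor queries for both $\alpha(\omega)$ and $\beta(\omega)$, with the same depth bookkeeping.
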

\begin{proof}
    For contradiction assume that 
    \[ \lim_{\Fnb}\Mnh \bbl (\exists x)(\exists y)(\exists z)(x\neq y \land E(x,z)\land E(y,x))\bbr > \0,\]
    therefore, there are $\alpha,\beta,\gamma\in U(\Fnb)$, such that there is $p>0$ and 
    \[ \mu(\lim_{\Fnb}\Mnh \bbl (\alpha\neq \beta \land E(\alpha,\gamma)\land E(\beta,\gamma))\bbr) = p,\]
    therefore, as the evaluated sentence is quantifier free, we have
    \[ \st\left(\Pr_{\omega \in \Mnh} [(\alpha(\omega)\neq \beta(\omega) \land E_\omega(\alpha(\omega),\gamma(\omega))\land E_\omega(\beta(\omega),\gamma(\omega)))]\right) = p. \tag{\dag}\]

    Assume that $T_\alpha,T_\beta,T_\gamma \in \Tnb$ are the trees computing $\alpha,\beta$ and $\gamma$. Let us consider the tree $T\in \Tnb$ which can be obtained by taking $T_\gamma$ and replacing every leaf by a copy of the tree $T_\alpha$, and then appending $T_\beta$ to every new leaf by every possible labeled edge. By $(\dag)$ the fraction of paths of $T$ where the tree asks for neighbors of $\alpha(\omega)$ and $\beta(\omega)$ and obtains $\gamma(\omega)$ is infinitesimally close to $p$ and thus the probability of failure of $T$ is not infinitesimally close to $1$. This is in contradiction with Lemma~\ref{lemmabirthdayparadox}.
\end{proof}

\section{Model for separation of $\OntoWeakPigeon$ and $\WeakPigeon$}\label{secsepwphp}

In this section, we will expand $\lim_{\Fnb} \Mnh$ to the model $K(\Mnh,\Fnb,\Gnb)$, and show that $\OntoWeakPigeon$ is total, but the problem $\WeakPigeon$ is not. We then show that this implies nonexistence of a many-one reduction from $\WeakPigeon$ to $\OntoWeakPigeon$.

\subsection{Construction of the model}

Let us start by assuming for the rest of this work that $n$ is a power of two, this will allow us to easily convert between sets of binary strings and numbers. In the models $\M_n$ we are working with there is a pairing function $\langle i,j \rangle$ which codes pairs of numbers by a single number. Thus, we can represent functions of any finite arity by unary functions in $\M_n$. We use this to define the family $\Gnb$.

\begin{defi}
    We define $\Gnb$ to be the family of all random functions on $\Mnh$ which fulfill the following. For each unary $\Theta \in \Gnb$ there exists a tuple $(\gamma_0,\dots,\gamma_{m-1})$ coded by an element of $\M$, such that $\gamma_i\in \Fnb$ and for $\alpha\in\Fnb$
    \[\Theta(\alpha)(\omega)=\begin{cases}
        \gamma_{\alpha(\omega)}(\omega)&\alpha(\omega)<m\\
        0&\text{otherwise.}
    \end{cases}\]

    For every $k\in\NN,k>1$ and $k$-ary $\Theta\in\Gnb$ there is a $(k-1)$-ary $\Theta'\in\Gnb$ such that for every $\alpha_0,\dots,\alpha_{k-1}\in\Fnb$ we have
    \[\Theta(\alpha)(\omega)=
        \Theta'(\alpha_0,\dots,\langle \alpha_{k-2} , \alpha_{k-1}\rangle)(\omega).\]
\end{defi}

One can also regard $k$-ary functions from $\Gnb$ as those computed by $k$-dimensional tuples (that is tuples indexed by $k$-tuples) of elements of $\Fnb$. To further explain the formalism, if we have unary $\Theta\in\Gnb$ and some $\omega\in\Mnh$ the function $\Theta_\omega$ is the function determined by the tuple $(\gamma_0(\omega),\dots,\gamma_{m-1}(\omega))$ of elements of $\M_n$  with the property that $i \mapsto \gamma_i(\omega)$ if $i<m$. Random function families defined using tuples of random variables are used frequently in~\cite{krajicek2010forcing} and, more importantly, they generalize functions computable by an oracle Turing machine. This allows us to obtain nonexistence of many-one reducibility to $\OntoWeakPigeon$ in Theorem~\ref{thrmsep}.

\begin{defi}
    We define $\Gamma$, an element of $\Gnb$, as the random function computed by the tuple $(\gamma_{\langle i,j \rangle})_{i,j=0}^{n-1}$, where $\gamma_{\langle i,j\rangle}$ is computed by a tree $T_{i,j}\in\Tnb$ of depth $1$ which queries the neighbor of $i$ and outputs $1$ if it is $j$, and otherwise it outputs $0$.
\end{defi}

\begin{lemm}\label{lemmmisc}
    \begin{enumerate}
        \item $\Fnb$ has restrictable ranges
        \item $\Fnb$ is $L_n$-closed
        \item $\Gnb$ is $\Fnb$-compatible
        \item the edge relation of the wide limit $\lim_{\Fnb}\Mnh$ is represented in $\Gnb$ by $\Gamma$.
    \end{enumerate}
\end{lemm}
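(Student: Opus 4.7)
The plan is to verify each of the four claims by explicit construction of trees in $\Tnb$ (and finite tuples thereof for $\Gnb$). The only nontrivial point anywhere is to check that the tree operations used preserve the depth bound demanded by the definition of $\Tnb$, namely being at most $n^{1/s}$ for some $s\in\M\setminus\NN$; this is the main (modest) technical obstacle, and it recurs in essentially the same form in parts (2) and (3).

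For (1), given $\alpha\in\Fnb$ computed by $T\in\Tnb$ and $m\in\M_n$, I would take a copy of $T$ inside $\M$ and relabel every leaf having label $v$ by $v$ itself when $v<m$, and by $0$ otherwise; the depth is unchanged, so the result lies in $\Tnb$ and computes $\tilde\alpha_m$. For (2), given $f\in L_n$ of standard arity $k$ and $\alpha_0,\dots,\alpha_{k-1}\in\Fnb$ computed by $T_0,\dots,T_{k-1}\in\Tnb$, I would stack the trees (replace each leaf of $T_0$ by a copy of $T_1$, and so on) and relabel each leaf of the composite --- which is determined by a tuple $(v_0,\dots,v_{k-1})$ of intermediate outputs --- by $f(v_0,\dots,v_{k-1})\in\M_n$. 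If the $T_i$ have depths $\leq n^{1/t_i}$ with $t_i\in\M\setminus\NN$ and $t^*=\min_i t_i$, the composite has depth at most $k\cdot n^{1/t^*}$, which is itself at most $n^{1/s}$ for a suitable nonstandard $s$: when $n^{1/t^*}\geq k$ one takes $s=t^*/2$, and otherwise $k\cdot n^{1/t^*}$ is standard so any sufficiently small nonstandard $s$ works (using that $n$ is nonstandard).

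For (3), I would reduce the $k$-ary case to the unary one by iterating the pairing reduction built into the definition of $\Gnb$, using part (2) applied to the pairing symbol at each step to keep $\langle\alpha_{k-2},\alpha_{k-1}\rangle$ in $\Fnb$. In the unary case $\Theta$ is determined by a tuple $(\gamma_0,\dots,\gamma_{m-1})\in\M$ of elements of $\Fnb$, and one computes $\Theta(\alpha)$ by taking $T_\alpha$, attaching at each leaf with label $v<m$ a copy of the tree computing $\gamma_v$, and relabeling all remaining leaves by $0$; the depth estimate is identical to the one used in (2).

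For (4), the atomic formula $\Gamma(\alpha,\beta)$ in $K(\Mnh,\Fnb,\Gnb)$ is evaluated as $\Gamma(\alpha,\beta)\neq 0$, and unpacking the $\Gnb$-data of $\Gamma$ gives $\Gamma(\alpha,\beta)(\omega)=\gamma_{\langle\alpha(\omega),\beta(\omega)\rangle}(\omega)$, which by the construction of the depth-one trees $T_{i,j}$ equals $1$ precisely when $E_\omega(\alpha(\omega),\beta(\omega))$ holds. Both sides of the desired equality therefore reduce to $\{\omega\in\Mnh:E_\omega(\alpha(\omega),\beta(\omega))\}/\I$, and there is nothing further to check.
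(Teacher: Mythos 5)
Your proof is correct and follows essentially the same approach as the paper's, which handles (1) and (2) by relabeling, (3) by grafting the trees computing the $\gamma_i$'s onto the leaves of the tree computing $\alpha$, and (4) by unfolding the definitions of $\Gamma$ and $\Tnb$. You are somewhat more explicit than the paper in two places — making the stacking of trees visible in the argument for $L_n$-closedness (the paper's ``simply relabeling'' tacitly includes this), and spelling out the depth estimate $k\cdot n^{1/t^*}\le n^{1/s}$ including the degenerate case — but these are elaborations of the same idea rather than a different route.
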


\begin{proof}
    1, 2: Here we can proceed simply by relabeling the leaves of the trees computing the functions from $\Fnb$.

    3: Assume that $\Theta\in\Gnb$ is computed by a tuple $(\gamma_0,\dots, \gamma_{m-1})$ and the depth of all trees computing $\gamma_i$ is at most $n^{1/t}$ for some $t\in\M_n\setminus \NN$. For all $\alpha\in\Fnb$ we have that the tree $T$, which we construct by appending to each leaf of the tree computing $\alpha$ with label $i$ the tree $\gamma_i$, has depth at most $n^{1/t'}$ for some $t'\in\M_n\setminus \NN$ and therefore $T\in\Tnb$. The tree $T$ computes $\Theta(\alpha)$ by the definition of $\Gnb$. Hence, $\Gnb$ is $\Fnb$-compatible.

    4: By the definition of $\Gamma$ and $\Tnb$, we have for every $\alpha,\beta\in U(\Fnb)$: \[K(\Mnh,\Fnb,\Gnb)\bbl\Gamma(\alpha,\beta)\bbr=\lim_{\Fnb}\Mnh \bbl E(\alpha,\beta)\bbr. \qedhere\]
\end{proof}

The model which we will analyze in the rest of this section is $K(\Mnh,\Fnb,\Gnb)$.

\subsection{Non-totality of $\WeakPigeon$}\label{subsecwphp}

In this section, we will show that the formalization of the statement `The problem $\WeakPigeon$ is total' is not true in the model $K(\Mnh,\Fnb,\Gnb)$. Since wide limits are defined on intervals of (non-standard) numbers and $\TFNP^{dt}$ problems are defined on sets of binary strings, let us describe how these sets correspond to each other in our formalized statement. The input oracles will be represented by elements of the set sort and each set of binary strings $\{0,1\}^t$, where $t\in\M_n$, will be identified with the interval $\{0,\dots,2^{t}-1\}$. Since $n$ is a power of two, the interval $\{0,\dots,n-1\}$ reserved for the values of $\Fnb$-vertices can be identified with $\{0,1\}^{\abs{n}-1}$. We also obtain the bijection $\{0,\dots,n-1\}\cong\{t\in\M_n ; \abs{t}<\abs{n}\}$.

We will now define a formula $\varphi_\WeakPigeon(X,m,x_1,x_2)$ which is a formalization of the statement that `The values $x_1$ and $x_2$ are a solution to the $\WeakPigeon$ instance $(X,m)$.' It is more natural in the arithmetic setting to accept as instances functions of arbitrary range and then allow inputs mapped outside $R_m=\{s\in \M_n; \abs{s}<\abs{m}-1\}$ as solutions. This variant is many-one reducible to the original problem, as we can remap the inputs outside $R_m$ to a fixed value in $R_m$.

\begin{defi}
    Let $\varphi_\WeakPigeon^0(X,m,x_1,x_2)$ be the following $L_n^2$-formula:
        \[( \abs{X(x_1)} \geq \abs{m}-1) \lor (\abs{X(x_2)}\geq \abs{m}-1) \lor (x_1 \neq x_2 \land X(x_1)=X(x_2))) \]
    and $\varphi_\WeakPigeon(X,m,x_1,x_2)$ to be the following $L_n^2$-formula:
    \begin{align*}
        (m=0) \lor (\abs{x_1} < \abs{m} \land \abs{x_2} < \abs{m} \land  \varphi_\WeakPigeon^0(X,m,x_1,x_2))) 
    \end{align*}
\end{defi}

\begin{thrm}\label{thrmwphp}
   \[K(\Mnh, \Fnb, \Gnb)\bbl (\forall X)(\forall m)(\exists x_1)(\exists x_2)(\varphi_\WeakPigeon(X,m,x_1,x_2))\bbr = \0\]
\end{thrm}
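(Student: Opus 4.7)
The plan is to exhibit a single witness $(X_f, \nu_n) \in \Gnb \times \Fnb$ such that $\Lor_{x_1, x_2 \in \Fnb} \bbl \varphi_\WeakPigeon(X_f, \nu_n, x_1, x_2) \bbr = \0$, which by the definition of the outer universal meet forces the whole sentence to evaluate to $\0$. I would take $\nu_n \in \Fnb$ to be the constant function $n$ (a single-leaf tree in $\Tnb$), and $X_f \in \Gnb$ to be the unary random function determined by the tuple $(\gamma_i)_{i=0}^{n-1}$, where $\gamma_i \in \Fnb$ is computed by the depth-$1$ tree in $\Tnb$ whose root queries the neighbor of $i$ and whose leaf along the edge labeled $u$ outputs $u$. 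For any $\alpha \in \Fnb$ we then have $X_f(\alpha)(\omega) = f_\omega(\alpha(\omega))$ when $\alpha(\omega) < n$, and $0$ otherwise, where $f_\omega$ is the function whose graph is $\omega$; so $X_f$ is the natural $\Gnb$-encoding of the $\WeakPigeon$ instance given by $\omega$ itself.

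The body of the proof is then to check that for every fixed $x_1, x_2 \in \Fnb$, each disjunct of $\varphi_\WeakPigeon(X_f, \nu_n, x_1, x_2)$ has Boolean value $\0$. The disjunct $\nu_n = 0$ vanishes deterministically since $n > 0$. For the two ``out-of-range'' disjuncts $\abs{X_f(x_i)} \geq \abs{\nu_n} - 1$, I would use that $n$ is a power of two, so $\abs{n} - 1 = \log_2 n$ and the condition becomes $X_f(x_i) \geq n/2$; by construction $X_f(x_i)(\omega)$ is either $0$ or a value of $f_\omega$ and hence always lies in $\{0, \ldots, \floor{n/2}-1\}$, so this disjunct is $\0$.

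The only substantive case is the collision disjunct $\abs{x_1}<\abs{\nu_n} \land \abs{x_2}<\abs{\nu_n} \land x_1 \neq x_2 \land X_f(x_1) = X_f(x_2)$. Its associated set in $\A$ is contained in $\{\omega : x_1(\omega), x_2(\omega) < n,\ x_1(\omega) \neq x_2(\omega),\ f_\omega(x_1(\omega)) = f_\omega(x_2(\omega))\}$. Using the restrictable ranges of $\Fnb$ (Lemma~\ref{lemmmisc}, Definition~\ref{defirestri}), let $\alpha_i \in U(\Fnb)$ be the truncation of $x_i$ to $\{0, \ldots, n-1\}$; on the event $x_i(\omega) < n$ we have $\alpha_i = x_i$, so the above set is a subset of the event that the $\Fnb$-vertices $\alpha_1, \alpha_2$ are distinct in $\omega$ yet share a common neighbor. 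By Theorem~\ref{thrmwlcoll}, this event lies in the ideal $\I$, so the disjunct evaluates to $\0$. The main obstacle is precisely this reduction to the collision-freeness of the wide limit; once that invocation is set up via the restrictable-range trick, the remaining numerical checks on the length conditions are routine.
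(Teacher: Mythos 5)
Your proposal is correct and takes essentially the same approach as the paper: you pick the same witness ($\Theta = X_f$ computed by depth-$1$ query trees, $m=n$), dispose of the disjuncts identically, and the substantive collision disjunct is reduced to Theorem~\ref{thrmwlcoll} exactly as in the paper's Claim~II. The only cosmetic difference is that where the paper routes the reduction through the representation $\Gamma$ and the general transfer theorem (Theorem~\ref{thrmtrans}), you inline the restrictable-ranges argument and read off the collision-freeness directly as an event in $\I$; you would also want to note explicitly that the common-neighbor event requires exhibiting the witness $\gamma=\Theta(\alpha_1)\in U(\Fnb)$ before Theorem~\ref{thrmwlcoll} applies, but this is a trivial addition.
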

\begin{proof}
    Let us fix $\Theta\in \Gnb$ which is computed by the tuple $(\theta_0,\dots,\theta_{n-1})$, where the element $\theta_i$ is computed by the depth one tree, whose root is labeled $i$ and each leaf is labeled the same as its corresponding edge.

    \textbf{Claim I:} $K(\Mnh, \Fnb, \Gnb)\bbl (\forall x<n)(\forall y<n)((\Theta(x)=y) \equiv \Gamma(x,y))\bbr = \1$\\
    \textit{Proof of claim.} Since the sentence is universal, it is enough to check its validity on every $\omega\in\Mnh$, which in turn follows from the definitions of $\Theta$ and $\Gamma$.

    \textbf{Claim II:} \[K(\Mnh, \Fnb, \Gnb)\bbl (\forall x<n)(\forall y<n)(\forall z<n)(x = y \lor \lnot \Gamma(x,z) \lor \lnot\Gamma(y,z))\bbr = \1\]
    \textit{Proof of claim.} This follows from Theorem~\ref{thrmwlcoll} which shows the validity of the corresponding sentence in the wide limit $\lim_{\Fnb} \Mnh$, Lemma~\ref{lemmmisc} and Theorem~\ref{thrmtrans}.
    
    \textbf{Claim III:} $K(\Mnh, \Fnb, \Gnb)\bbl (\forall x<n)(\forall x_2<n)(x_1 = x_2 \lor \Theta(x_1) \neq \Theta(x_2))\bbr = \1$
    \textit{Proof of claim.} By Claims I and II and validity of predicate logic.

    \textbf{Claim IV:} $K(\Mnh, \Fnb, \Gnb)\bbl (\forall x<n)(\abs{\Theta(x)}<\abs{n}-1)\bbr = \1$\\
    \textit{Proof of claim.} Again, by the universality we just need to check the validity at each $\omega\in \Mnh$, which follows as $\Theta$ outputs neigbors of vertices in $\omega$ which are by the definition of $\Mnh$ always in the range $\{0,\dots,\floor{n/2}-1\}=\{t\in M_n; \abs{t}<\abs{n}-1\}$.

    The claims III and IV show the validity of the negations of all disjuncts of the formula $\varphi_\WeakPigeon^0(\Theta,n)$. The formula $(n=0)$ is also obviously assigned the Boolean value $\0$. This implies that
    \[K(\Mnh, \Fnb, \Gnb)\bbl (\exists x_1<n)(\exists x_2<n)\varphi_\WeakPigeon(\Theta,n)\bbr = \0,\]
    which implies the theorem by the Boolean evaluation of universal quantifiers.
\end{proof}

\subsection{Totality of $\OntoWeakPigeon$}

We again start by defining a formula $\varphi_\OntoWeakPigeon(X,Y,m,x)$ which is a formalization of a statement: `The value $x$ is a solution to the instance $(X,Y,m)$ of the problem $\OntoWeakPigeon$.'

\begin{defi}
    Let $\varphi_\OntoWeakPigeon^0(X,Y,m,x)$ be the following $L_n^2$-formula:
    \begin{align*}
        (\abs{X(x)} &\geq \abs{m}-1) \lor (Y(X(x))\neq x)
    \end{align*}
    and let  $\varphi_\OntoWeakPigeon(X,Y,m,x)$ be the following $L_n^2$-formula:
    \[
        (m=0) \lor (\abs{x} <\abs{m} \land \varphi_\OntoWeakPigeon^0(X,Y,m,x)).
        \]
\end{defi}

\begin{defi}
    Let $\Theta,\Xi \in \Gnb$, $\mu\in \Fnb$ and $\omega\in\Mnh$. We say that a tree $T\in\Tnb$ \emph{fails for} $(\Theta,\Xi,\mu)$ \emph{on} $\omega$ if $\mu(\omega)\neq 0$ and either $T(\omega)\geq \abs{\mu(\omega)}$ or
    \[(\Xi_\omega(\Theta_\omega(T(\omega))) = T(\omega) \text{ and } \Theta_\omega(T(\omega)) < \abs{\mu(\omega)}-1).\]
\end{defi}

\begin{lemm}\label{lemmrwphptree}
    For every $\Theta,\Xi \in \Gnb$ and $\mu\in \Fnb$ there is a tree $T\in\Tnb$ such that
    \[\st\left(\Pr_{\omega\in\Mnh}[T\text{ fails for }(\Theta,\Xi,\mu)\text{ on $\omega$}]\right )=0.\]
\end{lemm}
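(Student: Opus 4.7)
The plan is to build $T$ in two stages: it first queries the tree for $\mu$ to learn $m = \mu(\omega)$, then, for each possible value $m$, descends into a subtree that sequentially tries a list of candidate values $x_0^{(m)}, \dots, x_{K-1}^{(m)} \in \{0, \dots, 2^{\abs{m}-1}-1\}$ chosen in advance. For each $x_i^{(m)}$ in turn, the subtree runs the tree for $\Theta$ to compute $\Theta_\omega(x_i^{(m)})$ and then the tree for $\Xi$ to compute $\Xi_\omega(\Theta_\omega(x_i^{(m)}))$; it outputs the first $x_i^{(m)}$ for which $\varphi_\OntoWeakPigeon^0(\Theta, \Xi, m, x_i^{(m)})$ holds, and a default value if none does (which is the only way $T$ can fail).

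\textbf{Pigeonhole.} The key combinatorial ingredient is that for every $\omega$ with $\mu(\omega) = m \neq 0$, at least half of $x \in \{0, \dots, 2^{\abs{m}-1}-1\}$ are solutions. Indeed, a non-solution $x$ must satisfy $\abs{\Theta_\omega(x)} < \abs{m}-1$ and $\Xi_\omega(\Theta_\omega(x)) = x$, so the number of non-solutions is at most the size of the image of $\Xi_\omega$ on the set $\{0, \dots, 2^{\abs{m}-2}-1\}$, which is at most $2^{\abs{m}-2}$, i.e., half of the domain size $2^{\abs{m}-1}$.

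\textbf{Greedy selection of candidate lists.} The $x_i^{(m)}$ are chosen in $\M$ (outside $T$, merely baked into its structure) as follows. For each $m$, start with $\Omega_m^{(0)} = \{\omega : \mu(\omega) = m\}$; having fixed $x_0^{(m)}, \dots, x_{i-1}^{(m)}$, let $\Omega_m^{(i)}$ be the subset of $\omega \in \Omega_m^{(0)}$ for which none of them solves the instance, and pick $x_i^{(m)}$ so as to maximize $|\{\omega \in \Omega_m^{(i)} : x_i^{(m)} \text{ is a solution for } \omega\}|$. Averaging the pigeonhole bound over $x \in \{0, \dots, 2^{\abs{m}-1}-1\}$ shows that at least one choice achieves $|\Omega_m^{(i+1)}| \leq |\Omega_m^{(i)}|/2$, so $|\Omega_m^{(K)}| \leq |\Omega_m^{(0)}|/2^K$.

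\textbf{Depth and failure analysis.} Let $d \leq n^{1/t}$ be a common depth bound for the trees computing $\Theta, \Xi, \mu$ with $t \in \M \setminus \NN$, and set $K = n^{1/(2t)}$ (rounded to the nearest element of $\M$). Then $T$ has depth at most $d + 2Kd = O(n^{3/(2t)})$, bounded by $n^{1/t'}$ for some $t' \in \M \setminus \NN$, so $T \in \Tnb$. The failure probability is $\sum_{m \neq 0} \Pr[\mu(\omega) = m] \cdot |\Omega_m^{(K)}|/|\Omega_m^{(0)}| \leq 2^{-K}$, infinitesimal since $K$ is nonstandard, yielding $\st(\Pr[T \text{ fails}]) = 0$. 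The main delicate point is that the pigeonhole has to hold for every individual $\omega$ (not merely on average), which is what permits the greedy halving argument to proceed regardless of the internal structure of $\Theta$ and $\Xi$.
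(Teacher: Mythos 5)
Your proof is correct and arrives at the same $\left(1/2\right)^{K}$-type failure bound via the same pigeonhole core, but the tree is organized quite differently from the paper's, and in a simpler way. The paper builds the tree \emph{recursively and adaptively}: at stage $d$ it looks at each still-failing \emph{path} $p$, maintains the sets $D$ and $R$ of queried inputs and images revealed along $p$ (on which $\Theta_\omega$ and $\Xi_\omega$ are forced to be mutually inverse), chooses the next probe $i_p$ from the complement of $D$, and proves a halving bound \emph{conditioned on $\omega$ being compatible with $p$}. Your tree is non-adaptive beyond the $\mu$-subtree: the candidate list $x_0^{(m)},\dots,x_{K-1}^{(m)}$ depends only on $m = \mu(\omega)$, not on the $\Theta,\Xi$ answers seen, and the halving $\lvert\Omega_m^{(i+1)}\rvert\le\lvert\Omega_m^{(i)}\rvert/2$ comes from averaging the pointwise pigeonhole bound over all of $\Omega_m^{(i)}$ at once. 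As you correctly flag, the crux enabling this is that ``at least half the $x$ in $\{s:\lvert s\rvert<\lvert m\rvert\}$ are solutions'' holds for \emph{every} $\omega$ with $\mu(\omega)=m$, not merely on average; this makes the greedy halving valid regardless of how $\Theta_\omega,\Xi_\omega$ are structured, so you never need to track the $D$/$R$ partial bijections. The trade is a leaner bookkeeping on your side against a semi-oblivious probe schedule (which is enough here), while the paper's path-conditioned construction is more in the spirit of the generic Kraj\'i\v{c}ek-style tree arguments it sits next to. Your depth accounting ($d + 2Kd \le n^{1/t'}$ for nonstandard $t'$) goes through the same way as the paper's, and both implicitly need the parameter $K$ (resp.\ $d$) to be infinite, which is a routine point both of you gloss over similarly.
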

\begin{proof}
    We can assume that $\Theta$ is computed by $(\theta_0,\dots,\theta_{r-1})$ and $\Xi$ by $(\xi_0,\dots,\xi_{r-1})$, such that for all $\omega\in\Mnh$: $\mu(\omega) < r$ and that the depth of all the trees computing $\theta_i,\xi_i$ and $\mu$ is bounded by $n^{1/t}$ for some $t\in \M_n\setminus \NN$. For any $\Theta$ and $\Xi$ this can be achieved by padding any pair of tuples they are computed by the constant function $0$ in $\Fnb$.

    We will recursively construct trees $T_d\in \Tnb$ indexed by elements of $\M_n$ and then show that $T=T_{d}$ for sufficiently large value of $d\in \M_n$ satisfies the statement of the lemma. We will be constructing the tree by prolonging paths of previously constructed trees, where by a path we mean a complete path of nodes from the root all the way to some leaf. We say a sample $\omega\in\Mnh$ is \emph{compatible} with a path $p$ in a tree if the computation of the tree on $\omega$ is exactly $p$.

    \textbf{The initial tree $T_0$:} The initial tree $T_0$ is the tree computing $\mu$ whose leaves we relabel as follows. For every path $p$ in $T_0$ such that all $\omega\in\Mnh$ compatible with $p$ give $\mu(\omega)=0$, we have a non-failure of $T_0$ and we can keep the original label. On the other hand, for each path $p$ in $T_0$ such that all $\omega\in\Mnh$ compatible with $p$ give $\mu(\omega)>0$ we pick a label in the following way. Let $p$ be such a path and $m$ be the value $\mu(\omega)$ obtained for any $\omega\in\Mnh$ compatible with $p$. For each $\omega\in \Mnh$ compatible with $p$ we have
    \[\Pr_{i\in\{0,\dots,m-1\}}[\abs{\Theta_\omega(i)} <  \abs{m}-1 \text{ and } \Xi_\omega(\Theta_\omega(i)) = i]\leq \frac{2^{\abs{m}-2}}{2^{\abs{m}-1}}=\frac{1}{2},\]
    because the image of $\Xi_\omega\restriction\{s\in M_n;\abs{s}<\abs{m}-1\}$ has at most $2^{\abs{m}-2}$-many elements. This implies by an averaging argument that there is $i_p\in\{0,\dots,m-1\}$ such that
    \[\Pr_{\omega\in\Mnh}[\Theta_\omega(i_p) <  \floor{m/2} \text{ and } \Xi_\omega(\Theta_\omega(i_p)) = i_p \mid \omega\text{ compatible with }p] \leq \frac{1}{2}.\]
    We relabel each leaf at such a path $p$ to $i_p$. This concludes the construction of $T_0$ and assures that $\Pr_{\omega\in\Mnh}[T_0\text{ fails}] \leq \frac{1}{2}$ and that the depth of $T_0$ is at most $n^{1/t}$.

    \textbf{The recursive step:} Let us now assume that $d>0$ and that $T_{d-1}$ was already constructed with the property that
    \[\Pr_{\omega\in\Mnh}[T_{d-1}\text{ fails}] \leq \left(\frac{1}{2}\right)^{d}\]
    and that the depth of $T_{d-1}$ is at most $2dn^{1/t}$. Moreover, the tree $T_{d-1}$ on each path $p$ determines a value $m=\mu(\omega)$ and if this path fails, it also determines two $(d-1)$-element sets $D\subseteq \{s\in M_n; \abs{s}<\abs{m}\}$ and $R\subseteq \{s\in M_n; \abs{s}<\abs{m}-1\}$ such that $\Theta_\omega\restriction D$ and $\Xi_\omega \restriction R$ form a pair of inverse functions on every $\omega$ compatible with $p$. We also assume, that the leaf of every path $p$ is labeled by some element of $(\{s\in M_n; \abs{s}<\abs{m}\}\setminus D)$.  We will prolong $T_{d-1}$ along each path $p$ in two stages and relabel the leaves to obtain the tree $T_d$.

    \emph{First prolongation:} 
    Let $p$ be a path in $T_{d-1}$.
    %The tree $T_{d-1}$ has a label on each leaf which might lead to the tree failing, if a given path $p$ is labeled $l_p$, this label as an output may lead to a failure on some $\omega\in\Mnh$. 
    If the tree does not fail on any sample $\omega\in\Mnh$ compatible with $p$, we keep $p$ as it is. Otherwise, we replace the leaf by the tree computing $\theta_{l_p}$ and change the label of each new leaf from $l$ to the tuple $(l_p,l)$ and call all such new paths \emph{active}. We call the resulting tree $T_{d-1}'$ and by the assumption on the tree computing $\xi_{l_p}$ we get its depth is at most $n^{1/t}+2dn^{1/t}$.

    \emph{Second prolongation:}
    Let $p$ be an active path in $T_{d-1}'$ with its leaf labeled $(l_p,l_p')$ and let $m=\mu(\omega)$ on any $\omega$ compatible with $p$. If $\abs{l_p'} \geq \abs{m}-1$ then after relabeling the leaf of $p$ to the value $l_p$, the tree does not fail on any $\omega\in\Mnh$ compatible with $p$. If $\abs{l_p'} < \abs{m}-1$, we replace the leaf by the tree $\xi_{l_p'}$ and change the label of each new leaf from $l$ to the triple $(l_p,l_p',l)$. We will denote the resulting tree $T_{d-1}''$ and again call its newly prolonged paths \emph{active}. By the assumptions on the tree computing $\xi_{l_p'}$ we have that the depth of $T_{d-1}''$ is at most $2n^{1/t}+2dn^{1/t}=2(d+1)n^{1/t}$.

    \emph{Relabeling:} We will now relabel the leaves of active paths of $T_{d-1}''$. Let $p$ be an active path of $T_{d-1}''$ and let its leaf be labeled $(l_p,l_p',l_p'')$. If $l_p''$ is distinct from $l_p$ we can relabel the leaf to the value $l_p$, this makes the tree never fail along this path.  Otherwise, $l_p\not \in D$ and the tree so far have established a bijection between $D\cup\{l_p\}$ and $R\cup\{l_p'\}$ computed by $\Theta_\omega$ and $\Xi_\omega$ on any $\omega\in\Mnh$ compatible with $p$. This implies, that for each $\omega\in\Mnh$ compatible with $p$ we have
    \[\Pr_i[\abs{\Theta_\omega(i)} < \abs{m}-1 \text{ and } \Xi_\omega(\Theta_\omega(i)) = i]\leq \frac{2^{\abs{m}-2}-d}{2^{\abs{m}-1}-d} \leq \frac{2^{\abs{m}-2}}{2^{\abs{m}-1}} = \frac{1}{2},\]
    where the $i$ is sampled uniformly in $\{s\in M_n; \abs{s}<\abs{m}\}\setminus (D \cup \{l_p\})$ which by an averaging argument implies the existence of $i_p$ such that
    \[\Pr_{\omega\in\Mnh}[\abs{\Theta_\omega(i_p)} <  \abs{m}-1 \text{ and } \Xi_\omega(\Theta_\omega(i_p)) = i_p \mid \omega\text{ compatible with }p] \leq \frac{1}{2},\]
    we then relabel the leaf of $p$ to $i_p$. After relabeling all active paths in this way, we obtain the tree $T_d$.

    \emph{Properties of $T_d$:} The depth of $T_d$ is the same as of $T_{d-1}''$ which is at most $2(d+1)n^{1/t}$. During the relabeling, we have also established that the failing paths determine a bijection with one additional element added to both $R$ and $D$ of the corresponding path in $T_{d-1}$ and that the leaves are properly labeled. It remains to analyze the probability of $T_d$ failing. If a path $p$ in $T_{d-1}$ did not fail on some $\omega\in\Mnh$, any of its extensions in $T_d$ also did not fail on $\omega$. Moreover, if $p$ did fail, we found a new label for each of its extensions in $T_d$, which fails with probability at most $1/2$. Therefore,
    \[\Pr_{\omega\in\Mnh}[T_d\text{ fails}] \leq \frac{1}{2}\cdot\Pr_{\omega\in\Mnh}[T_{d-1}\text{ fails}] \leq \left (\frac{1}{2}\right ) ^d.\]

    \textbf{Conclusion:} Let $d=n^{1/t}$, then the depth of $T_d$ is at most $n^{1/(t/2)}$. Moreover,
    \[\Pr_{\omega\in\Mnh}[T_d\text{ fails}]\leq \left (\frac{1}{2}\right)^{n^{1/t}+1},\]
    standard part of which is $0$. Thus, $T=T_d$ satisfies the statement of the lemma.
\end{proof}

\begin{thrm}\label{thrmrwphp}
   \[K(\Mnh, \Fnb, \Gnb)\bbl (\forall X)(\forall Y)(\forall m)(\exists x<m)(\varphi_\OntoWeakPigeon(X,Y,m))\bbr = \1\]
\end{thrm}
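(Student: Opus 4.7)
The plan is to unfold the outer universal quantifiers into a meet indexed by triples $(\Theta,\Xi,\mu)\in\Gnb\times\Gnb\times\Fnb$, fix a generic triple, and produce a concrete $\Fnb$-witness by appealing to Lemma~\ref{lemmrwphptree}. Concretely, after noting
\[\bbl (\forall X)(\forall Y)(\forall m)(\exists x<m)\varphi_\OntoWeakPigeon(X,Y,m,x) \bbr = \bigwedge_{\Theta,\Xi \in \Gnb,\, \mu \in \Fnb} \bbl (\exists x<\mu)\varphi_\OntoWeakPigeon(\Theta,\Xi,\mu,x) \bbr,\]
I would fix $\Theta,\Xi\in\Gnb$ and $\mu\in\Fnb$ and invoke Lemma~\ref{lemmrwphptree} to obtain a tree $T\in\Tnb$ whose failure probability for $(\Theta,\Xi,\mu)$ is infinitesimal; the function $\tau\in\Fnb$ computed by $T$ is my proposed witness.

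With $\tau$ in hand, the goal reduces to showing $\bbl \tau<\mu \land \varphi_\OntoWeakPigeon(\Theta,\Xi,\mu,\tau)\bbr = \1$, equivalently that the set of $\omega\in\Mnh$ where this conjunction fails belongs to the ideal $\I$. I would verify this sample-by-sample. On any $\omega$ with $\mu(\omega)\neq 0$ for which $T$ does not fail, the non-failure clauses translate directly into $|\tau(\omega)|<|\mu(\omega)|$ (and hence $\tau(\omega)<\mu(\omega)$, since $\mu(\omega)\geq 2^{|\mu(\omega)|-1}$) together with the disjunction $|\Theta_\omega(\tau(\omega))|\geq |\mu(\omega)|-1$ or $\Xi_\omega(\Theta_\omega(\tau(\omega)))\neq \tau(\omega)$, which is precisely $\varphi_\OntoWeakPigeon^0(\Theta,\Xi,\mu,\tau)$ evaluated at $\omega$; on $\omega$ with $\mu(\omega)=0$, the $m=0$ disjunct of $\varphi_\OntoWeakPigeon$ fires. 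Consequently the bad set is contained in the failure set of $T$, which has infinitesimal counting measure by Lemma~\ref{lemmrwphptree} and therefore lies in $\I$.

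The serious combinatorial work has already been discharged by Lemma~\ref{lemmrwphptree}: its recursive construction iterates $n^{1/t}$ halving steps to drive the failure probability below $(1/2)^{n^{1/t}}$ while keeping the tree depth within the subexponential class defining $\Tnb$, which is exactly what makes $\tau$ live in $\Fnb$ and what makes the bad set infinitesimal. Given that lemma, the remaining work in the present theorem is essentially syntactic bookkeeping matching the non-failure clauses of $T$ to the bit-length disjuncts of $\varphi_\OntoWeakPigeon^0$; the only subtlety I expect to watch for is the implicit correspondence between the value-vs.-bit-length bounds, but this is immediate from the construction of the leaf labels of $T$.
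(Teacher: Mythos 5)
Your proposal is correct and matches the paper's argument: fix $\Theta,\Xi\in\Gnb$ and $\mu\in\Fnb$, invoke Lemma~\ref{lemmrwphptree} to obtain a tree $T$ with infinitesimal failure probability, take the $\Fnb$-function it computes as the existential witness, and observe that, since the resulting instance of $\varphi_\OntoWeakPigeon$ is quantifier-free, its Boolean value is computed sample-by-sample and the bad set is contained in the failure set of $T$. Your extra care in spelling out the $\mu(\omega)=0$ case and the translation from the non-failure clauses to the bit-length disjuncts of $\varphi_\OntoWeakPigeon^0$ is a welcome expansion of what the paper compresses into one sentence; in particular, your reading $\abs{\tau(\omega)}<\abs{\mu(\omega)}$ (as opposed to the literal $\tau(\omega)<\abs{\mu(\omega)}$ that appears in the paper's definition of failure) is the intended one, consistent with the leaf labels produced in the proof of Lemma~\ref{lemmrwphptree}.
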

\begin{proof}
   Let $\Theta,\Xi\in\Gnb$ and $\mu\in\Fnb$, we will find $\gamma\in\Fnb$ such that 
   \[K(\Mnh, \Fnb, \Gnb)\bbl \varphi_\OntoWeakPigeon(\Theta,\Xi,\mu,\gamma)\bbr = \1,\]
   which is enough to prove the theorem.

   Let $\gamma\in\Tnb$ be computed by the tree whose existence follows from Lemma~\ref{lemmrwphptree} for $\Theta,\Xi$ and $\mu$. The formula $\varphi_\OntoWeakPigeon(\Theta,\Xi,\mu,\gamma)$ is interpreted for each $\omega\in\Mnh$ as the statement claiming the tree computing $\gamma$ not failing for $(\Theta,\Xi,\mu)$ on $\omega$. Since it is a quantifier-free formula, it can be evaluated in $K(\Mnh,\Fnb,\Gnb)$ by calculating its probability over the sample space which by Lemma~\ref{lemmrwphptree} is infinitesimally close to $1$. Therefore, the formula $\varphi_\OntoWeakPigeon(\Theta,\Xi,\mu,\gamma)$ is valid in $K(M_{n,\floor{n/2}},\Fnb,\Gnb)$.
\end{proof}

\subsection{Nonreducibility of $\WeakPigeon$ to $\OntoWeakPigeon$}

In the next theorem, we will be working both with non-standard binary strings and non-standard numbers. We will define two pairs of functions which convert between these two types of objects. The first pair consists of the $\textsc{Wrap}$ function $\w$ which converts binary strings to non-zero numbers by appending a leading $1$ and interpreting the string as a binary expansion of the resulting number. The inverse to $\w$ is the $\textsc{Unwrap}$ function denoted $\u$. These are then used to convert between the size parameter of the formula formalizing totality, either $\varphi_\WeakPigeon$ or $\varphi_\OntoWeakPigeon$, and the size parameter of the associated $\TFNP^{dt}$ problem.

The second pair then provides an explicit bijection between the set $\{s;\abs{s}<\abs{m}\}$ and $\{0,1\}^{\abs{\u(m)}}$, or alternatively between $\{s; \abs{s}< \abs{\w(x)}\}$ and $\{0,1\}^{\abs{x}}$, depending on whether we start with a binary string $x$ or an element $m\in \M_n$. The first function, $\textsc{Pad}$, denoted $\p$ simply takes an input number $s$ and outputs its binary expansion with enough leading zeros to obtain a string of required size, this required size is a second argument of the function. However, we will never write this second argument explicitly, and its value will always be clear from the context. The function $\textsc{Unpad}$, denoted $\up$, is the inverse to $\p$ and simply takes a binary string and interprets it as a binary expansion of a number ignoring leading zeros.
\begin{center}
\begin{tabular}{|c|c|c|c|c|c|}
    \hline
    Operation & Symbol & Domain & Range & Input & Output \\ \hline
   \textsc{Wrap}& $\w$ & binary strings &$\M_n\setminus \{0\}$ & $x$ & $2^{\abs{x}} +\sum_{i<\abs{x}} x_i \cdot 2^{i}$ \\ \hline
   \textsc{Unwrap}& $\u$ & $\M_n\setminus \{0\}$ & binary strings & $s$ & $\w^{-1}(s)$ \\ \hline
   \textsc{Pad}& $\p$ & $\M_n$  & binary strings & $s, l$ & \multicolumn{1}{p{3.6cm}|}{The binary expansion of $s$ padded with leading zeros to the length of $l$.}  \\ \hline
   \textsc{Unpad}& $\up$ & binary strings & $\M_n$ & $x$ & $\sum_{i<l} x_i\cdot  2^i$ \\ \hline
\end{tabular}
\end{center}

We will begin by proving the following lemma, its proof is straightforward and it essentially shows that the formulas $\varphi_\WeakPigeon$ and $\varphi_\OntoWeakPigeon$ properly formalize the fact that a given element is a solution to the respective search problem. We only state one direction for each of the problems, this simplifies the proof and exactly captures the properties needed for the proof of our separation.

\begin{lemm}\label{lemmarbin}
    \leavevmode
    \begin{enumerate}
        \item Let $\Delta\in\Gnb$, $\nu\in \Fnb$ and $\omega\in\Mnh$. If $\nu(\omega)>0$, there is a $\WeakPigeon$ instance $I=(B(\Delta_\omega),\u(\nu(\omega)))$, such that:

            For every solution $(x_1,x_2)$ to $I$, we have that 
            \[\M_n \models \varphi_\WeakPigeon(\Delta_\omega,\nu(\omega),\up(x_1),\up(x_2)).\]
        \item Let $\Theta,\Xi\in\Gnb$, $\mu\in \Fnb$ and $\omega\in\Mnh$. If 
        \begin{enumerate}
            \item $\mu(\omega)>0$,
            \item the range of $\Theta_\omega\restriction \{s; \abs{s}<\abs{\mu(\omega)}\}$ is a subset of $\{s;\abs{s}<\abs{\mu(\omega)}-1\}$,
            \item the range $\Xi_\omega \restriction \{s; \abs{s}<\abs{\mu(\omega)}-1\}$ is a subset of $\{s;\abs{s}<\abs{\mu(\omega)}\}$,
        \end{enumerate}
        then the instance $J=(\p(\Theta_\omega(\up(-))),\p(\Xi_\omega(\up(-))),\u(\mu(\omega)))$ of the problem\\ $\OntoWeakPigeon$ satisfies: 
             
        For every $s\in \M_n$ satisfying $\M_n \models \varphi_\OntoWeakPigeon(\Theta_\omega,\Xi_\omega,\mu(\omega),s)$ we have that $\p(s)$ is a solution to $J$.
    \end{enumerate}
\end{lemm}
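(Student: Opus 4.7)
The plan is to give explicit constructions of the two instances and then verify the conclusions by unwinding the formulas, exploiting the facts that on binary strings of a fixed length $\p$ and $\up$ are mutually inverse bijections, and that on numbers $s$ with $\abs{s}\leq l$ padding to length $l$ is injective.

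For part (1), we define $B(\Delta_\omega)\colon \{0,1\}^{\abs{\u(\nu(\omega))}}\to \{0,1\}^{\abs{\u(\nu(\omega))}-1}$ as follows: given $x$ of length $\abs{\u(\nu(\omega))}=\abs{\nu(\omega)}-1$, let $y=\Delta_\omega(\up(x))$; if $\abs{y}\geq \abs{\nu(\omega)}-1$ we output the all-zero string of length $\abs{\u(\nu(\omega))}-1$, and otherwise we output $\p(y)$ padded to that length. The domain and codomain lengths then match the definition of $\WeakPigeon$. Given a solution $(x_1,x_2)$ to $I$, with $x_1\neq x_2$ and $B(\Delta_\omega)(x_1)=B(\Delta_\omega)(x_2)$, the outer size condition $\abs{\up(x_i)}<\abs{\nu(\omega)}$ is automatic from the length of $x_i$. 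If either $\abs{\Delta_\omega(\up(x_i))}\geq \abs{\nu(\omega)}-1$ holds, we are done by the first or second disjunct of $\varphi_\WeakPigeon^0$. Otherwise both values are small, so injectivity of padding gives $\Delta_\omega(\up(x_1))=\Delta_\omega(\up(x_2))$, while $x_1\neq x_2$ together with bijectivity of $\up$ on strings of a common length forces $\up(x_1)\neq\up(x_2)$, yielding the third disjunct.

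For part (2), conditions (b) and (c) immediately ensure that $\p(\Theta_\omega(\up(-)))$ and $\p(\Xi_\omega(\up(-)))$, with outputs padded to the correct lengths, are well-typed function oracles for the $\OntoWeakPigeon$ instance $J$. Now suppose $s$ satisfies $\varphi_\OntoWeakPigeon(\Theta_\omega,\Xi_\omega,\mu(\omega),s)$, so in particular $\abs{s}<\abs{\mu(\omega)}$. By (b), $\abs{\Theta_\omega(s)}<\abs{\mu(\omega)}-1$, so the first disjunct of $\varphi_\OntoWeakPigeon^0$ fails and we must have $\Xi_\omega(\Theta_\omega(s))\neq s$. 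Unfolding the definitions and using that $\up\circ\p$ is the identity at the relevant lengths, $\O^f(\p(s))=\p(\Theta_\omega(s))$ and then $\O^g(\p(\Theta_\omega(s)))=\p(\Xi_\omega(\Theta_\omega(s)))$. Since $s$ and $\Xi_\omega(\Theta_\omega(s))$ both have bit-length at most $\abs{\mu(\omega)}-1$ and are unequal, their paddings to length $\abs{\u(\mu(\omega))}$ are also distinct, so $\p(s)$ is a solution to $J$.

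There is no real obstacle; the content is essentially careful bookkeeping of padding lengths. The only conceptual point worth flagging is that the ``out-of-range'' disjuncts in the two formulas play opposite roles: in part (1) they are absorbed into forced collisions at a fixed default value, while in part (2) they are simply ruled out by the range hypotheses on $\Theta_\omega$ and $\Xi_\omega$.
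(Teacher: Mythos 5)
Your proof is correct and takes essentially the same approach as the paper: you define $B(\Delta_\omega)$ in the same way (pad the image when it lands in range, otherwise output a fixed default value) and then unwind the formulas, using injectivity of $\p$ and $\up$ on strings of the appropriate fixed length. The paper's own proof is considerably terser, especially for part (2) where it simply asserts the claim follows from the definitions, so your more explicit unfolding of the out-of-range cases and the padding-length bookkeeping is a faithful elaboration rather than a different argument.
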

\begin{proof}
    (1.) The function oracle $B(\Delta_\omega)$ can be defined as follows.
    \[B(\Delta_\omega)(x')=\begin{cases}
        \p(\Delta_\omega)(\up(x'))&\abs{\Delta_\omega(\up(x'))}<\abs{\mu(\omega)}-1,\\
        \p(0) & \text{otherwise,}
    \end{cases}\]
    any solution $(x_1,x_2)$ to $I$ is then either translated to a pair of distinct numbers mapped to the same element by $\Delta_\omega$ or one of them is mapped outside $\{s;\abs{s}<\abs{\mu(\omega)}-1\}$ and hence $\M_n \models \varphi_\OntoWeakPigeon(\Theta_\omega,\Xi_\omega,\mu(\omega),s)$.
    
    (2.) This follows straight from the assumptions and the definition of the formula  $\varphi_\OntoWeakPigeon$.
\end{proof}

\begin{thrm}\label{thrmsep}
    There is no many-one reduction from the problem $\WeakPigeon$ to the problem $\OntoWeakPigeon$.
\end{thrm}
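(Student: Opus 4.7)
The plan is a proof by contradiction using Theorems~\ref{thrmwphp} and~\ref{thrmrwphp}. Suppose a many-one reduction $\WeakPigeon \leq_m \OntoWeakPigeon$ exists, given by a polynomial-time $r$ together with input query trees $(T^I_{x'})$ and solution query trees $(T^s_{y'})$. I will simulate this reduction internally in the model $K(\Mnh,\Fnb,\Gnb)$ applied to the ``bad'' $\WeakPigeon$ instance $(\Theta,n)$ featured in the proof of Theorem~\ref{thrmwphp}, thereby producing random variables $\alpha,\beta\in\Fnb$ with $\bbl\varphi_\WeakPigeon(\Theta,n,\alpha,\beta)\bbr=\1$, contradicting Theorem~\ref{thrmwphp}.

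Set $x^\star=\u(n)$, $y^\star=r(x^\star)$, and $m^\star=\w(y^\star)$. The trees $T^I_{x'}$ and $T^s_{y'}$ have depth polynomial in $\abs{x^\star}=\abs{n}-1$, so their depths are of the form $n^{1/t}$ for some nonstandard $t$, and $2^{\abs{y^\star}}\in\M_n$. For each $x'\in\{0,1\}^{\abs{y^\star}}$, convert each oracle query of $T^I_{x'}$ into a neighbor query in $\omega$ using the correspondence $B(\Theta_\omega)(z)=\p(\Theta_\omega(\up(z)))$ (with a case split when $\Theta_\omega$'s image is out of range). This produces trees in $\Tnb$ computing random variables in $\Fnb$; indexed by the output bits and by $x'\in\{0,1\}^{\abs{y^\star}}$ via the pairing function, they assemble into elements $\Theta',\Xi'\in\Gnb$ which on every $\omega$ reproduce the reduced $\OntoWeakPigeon$ oracles $\O^f,\O^g$ associated with the $\WeakPigeon$ input $(B(\Theta_\omega),x^\star)$. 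Let $\mu\in\Fnb$ be the constant function with value $m^\star$.

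By Theorem~\ref{thrmrwphp}, there is $\gamma\in\Fnb$ with
\[K(\Mnh,\Fnb,\Gnb)\bbl\varphi_\OntoWeakPigeon(\Theta',\Xi',\mu,\gamma)\bbr=\1,\]
so on a set of samples $\omega$ of full measure, $\p(\gamma(\omega))$ is a valid $\OntoWeakPigeon$ solution to the reduction's output --- this is a direct analogue of Lemma~\ref{lemmarbin}(2). By the defining property of a many-one reduction, $T^s_{\p(\gamma(\omega))}(B(\Theta_\omega))$ is then a solution to the original $\WeakPigeon$ instance $(B(\Theta_\omega),x^\star)$. Simulating $(T^s_{y'})_{y'}$ by first branching on $\gamma$ --- legal because $\Fnb$ is $L_n$-closed (Lemma~\ref{lemmmisc}) and the composite tree still has subexponential depth --- and then converting oracle queries to neighbor queries as before, I obtain $\alpha,\beta\in\Fnb$ such that $(\p(\alpha(\omega)),\p(\beta(\omega)))=T^s_{\p(\gamma(\omega))}(B(\Theta_\omega))$ on that full-measure set. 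Lemma~\ref{lemmarbin}(1) then gives $\M_n\models\varphi_\WeakPigeon(\Theta_\omega,n,\alpha(\omega),\beta(\omega))$ for those $\omega$, so $\bbl\varphi_\WeakPigeon(\Theta,n,\alpha,\beta)\bbr=\1$, contradicting Theorem~\ref{thrmwphp}.

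I expect the main obstacle to be the bookkeeping required to pass rigorously between the external formulation of many-one reductions (binary strings, query trees indexed by strings) and the internal world of $\Fnb$, $\Gnb$, $\Tnb$ (numbers in $\M_n$, trees of subexponential depth in $\abs{n}$), carrying the conversions $\w,\u,\p,\up$ through at each step and verifying that the constructed oracles and solutions all satisfy the size-and-depth constraints that place them in the intended classes.
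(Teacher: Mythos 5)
Your proposal is correct in spirit and shares the paper's core idea (simulate the hypothetical reduction inside $K(\Mnh,\Fnb,\Gnb)$, pull a guaranteed $\OntoWeakPigeon$ solution back through it to produce a $\WeakPigeon$ solution, and contradict the non-totality of $\WeakPigeon$), but it differs from the paper in two structural choices worth flagging, one of which leaves a small gap.

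First, you specialize to the single ``bad'' instance $(\Theta,n)$ from the proof of Theorem~\ref{thrmwphp}, whereas the paper starts from arbitrary $\Delta\in\Gnb$ and $\nu\in\Fnb$ and establishes the universally quantified implication $\varphi_\OntoWeakPigeon(\Theta,\Xi,\mu,\gamma)\to\varphi_\WeakPigeon(\Delta,\nu,\Lambda^1(\gamma),\Lambda^2(\gamma))$ for every $\gamma$. Because of this, the paper re-derives the full sentence $(\forall X)(\forall m)(\exists x_1)(\exists x_2)\varphi_\WeakPigeon(X,m,x_1,x_2)$ with value $\1$, which directly contradicts the \emph{statement} of Theorem~\ref{thrmwphp}. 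Your argument, by contrast, produces $\bbl\varphi_\WeakPigeon(\Theta,n,\alpha,\beta)\bbr=\1$ for one instance; that alone does not contradict the literal statement of Theorem~\ref{thrmwphp}, which only asserts a universal meet is $\0$. It \emph{does} contradict the fact established inside the proof of Theorem~\ref{thrmwphp}, namely $\bbl(\exists x_1{<}n)(\exists x_2{<}n)\varphi_\WeakPigeon(\Theta,n,x_1,x_2)\bbr=\0$, so the argument closes --- but you should cite that intermediate fact rather than the theorem's statement.

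Second, you assert that Theorem~\ref{thrmrwphp} yields a single $\gamma\in\Fnb$ with $\bbl\varphi_\OntoWeakPigeon(\Theta',\Xi',\mu,\gamma)\bbr=\1$. The theorem's statement only says the join $\Lor_\gamma\bbl\cdots\bbr=\1$, and in a general Boolean-valued setting a join achieving $\1$ need not be witnessed by a single term. Here the witness does exist because Lemma~\ref{lemmrwphptree} (equivalently, the proof of Theorem~\ref{thrmrwphp}) produces an explicit tree $T\in\Tnb$ that fails with measure $0$; you should invoke that lemma directly. The paper sidesteps this entirely: it proves the implication $\bbl\varphi_\OntoWeakPigeon(\ldots,\gamma)\to\varphi_\WeakPigeon(\ldots)\bbr=\1$ for \emph{every} $\gamma$, takes the join over $\gamma$ on the antecedent side, and then only uses the statement of Theorem~\ref{thrmrwphp} together with the monotonicity of the join.

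Two minor points: when you say branching on $\gamma$ is ``legal because $\Fnb$ is $L_n$-closed,'' the relevant fact is really the $\Fnb$-compatibility of $\Gnb$ (Lemma~\ref{lemmmisc}(3)), i.e.\ that composing a tree with a family of trees indexed by its leaves stays within depth $n^{1/t'}$ for some nonstandard $t'$. And before applying the analogue of Lemma~\ref{lemmarbin}(2) you should check conditions (a)--(c) for the constructed $(\Theta',\Xi',\mu)$, which hold because the reduction's output oracles $\O^f,\O^g$ have the codomain lengths mandated by $\OntoWeakPigeon$ and $\abs{\mu(\omega)}-1=\abs{r(\u(n))}$; this is the same ``by construction'' step the paper also leans on. Modulo these citations, your route is sound and in fact slightly more concrete than the paper's, at the cost of having to peer inside the proofs of Theorems~\ref{thrmwphp} and~\ref{thrmrwphp} rather than using them as black boxes.
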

\begin{proof}
    We will proceed by contradiction, assume that $\WeakPigeon$ is many-one reducible to $\OntoWeakPigeon$. We will show that this implies 
    \[K(\Mnh,\Fnb,\Gnb)\bbl(\forall X)(\forall m)(\exists x_1)(\exists x_2)\varphi_\WeakPigeon(X,m,x_1,x_2)\bbr = \1,\]
    which is in contradiction with Theorem~\ref{thrmwphp}.
 
    The existence of the reduction implies that there is a function $r$ from binary strings to binary strings and for each length-parameter to $\WeakPigeon$ we get the suitable sequences of query trees. Let us also define a variant of $r$ which operates on numbers: The function $r'$ maps any non-zero $x\in\M_n$ to $\w(r(\u(x)))$.

    Let $\Delta \in \Gnb$, $\nu \in \Fnb$. We will use the reduction to construct some $\Theta,\Xi, \Lambda^1,\Lambda^2 \in \Gnb$ and $\mu\in \Fnb$ such that the formula    
    \[\varphi_\OntoWeakPigeon(\Theta,\Xi,\mu,\gamma) \to \varphi_\WeakPigeon(\Delta,\nu,\Lambda^1(\gamma),\Lambda^2(\gamma))\tag{\dag}\]
    is valid in $K(\Mnh,\Fnb,\Gnb)$ for any $\gamma\in\Fnb$.

    Let us assume $\Delta$ is computed by $(\delta_0,\dots,\delta_{d-1})$, $d\in \M_n$, where each element is from $\Fnb$ and $\delta_i$ is computed by $T^\delta_i\in\Tnb$ for each $i<d$. First, we define $\mu$ as the function computed by the tree of $\nu$, where we change the label of each leaf from $l$ to $r'(l)$. By Lemma~\ref{lemmarbin}, after the computation of the tree computing $\nu$ is determined, a set of $\WeakPigeon$ instances of the form $(B(\Delta_\omega),u(\nu(\omega)))$ is determined, where $\omega\in\Mnh$ is compatible with the computation of $\nu$. By the existence of the reduction, we also obtain the sequences of query trees $(T^I_{x'})_{x'\in\{0,1\}^{\abs{r(\u(m))}}}$ and $(T^s_{y'})_{y'\in\{0,1\}^{\abs{r(\u(m))}}}$.

    Our goal now is to define sequences of $\Tnb$ trees which will define $\Theta$, $\Xi$, $\Lambda_1$ and $\Lambda_2$. To do this, we first have to convert the query trees to $\Tnb$ trees. Assume, that $\mu$ is determined, therefore $\nu$ is determined and we obtain the sequences $(T^I_{x'})_{x'}$ and $(T^s_{y'})_{y'}$. For each tree $T^I_{x'}$ we define a $\Tnb$ tree $S^I_{x'}$ by simulating each query $q$ with $T^\delta_{\up(q)}$. That is, layer by layer, starting at the parents of leaves, replace each non-leaf node labeled $q$ with the tree computing $T^\delta_{\up(q)}$ and appending the subtrees from previous layer onto the leaves of the new copy $T^\delta_{\up(q)}$ with the same label as the edge they were originally incidental to. This can also be done with $T^s_{y'}$ to obtain the $\Tnb$ tree $S^s_{y'}$. Let $d'\in \M_n$ be a power of $2$ larger than any possible value of $\mu$. For each $i<d'$ we will construct the tree $U^I_i$ as follows: Start with the tree computing $\mu$ where each path $p$ in this tree with a leaf labeled $m$ satisfying $\abs{i}<\abs{m}$ gets replaced by the tree $S^I_{\p(i)}$. Analogously, we also define $U^s_i$ to be the tree constructed from the tree computing $\mu$ by appending $S^s_{\p(i)}$ onto each path labeled by $m$ satisfying $\abs{i}<\abs{m}$. Let us also assume, that the labels of $U^I_{x'}$ and $U^s_{y'}$ are numbers obtained from the original labels by the function $\up$.

    Since $\OntoWeakPigeon$ is a problem presented by two function oracles, there is a way to compute either of the functions $\O^f$ and $\O^g$ using a single input oracle which in our case is itself computed by the sequence $(T^I_{x'})_{x'}$. We will define $\Theta$, $\Xi$, $\Lambda_1$ and $\Lambda_2$ by specifying the elements of the tuples they are computed by which are denoted by the respective lowercase letters. We define $\theta_{i}$ to be computed by the tree $U^I_{i}$, except we relabel its leaves to only keep the value of $\O^f$ and similarly let $\xi_{i}$ be also computed by $U^I_{i}$ except we relabel it to only keep the value of $\O^g$. We also define $\lambda_i^1$ to be computed by $U^s_{i}$ relabeled to keep only the value for the first part of the solution and $\lambda_i^2$ to be computed by $U^s_{i}$ relabeled to keep only the second part of the solution.

    To prove the validity of $(\dag)$, which is a quantifier-free sentence, we can simply check the validity over every $\omega\in\Mnh$. After $\omega$ is fixed, the tuples 
    \[J=(\p(\Theta_\omega(\up(-))),\p(\Xi_\omega(\up(-))),u(\mu(\omega)))\text{ and }I=(B(\Delta_\omega),u(\nu(\omega)))\]
     are instances of the problems $\OntoWeakPigeon$ and $\WeakPigeon$ respectively. Assume that the antecedent of $(\dag)$ is satisfied, therefore \[\M_n\models \varphi_\OntoWeakPigeon(\Theta_\omega,\Xi_\omega,\mu(\omega),\gamma(\omega)).\]
    
    Notice that $J$ satisfies the assumptions (a), (b) and (c) of Lemma~\ref{lemmarbin} by contruction, this shows that that $\p(\gamma(\omega))$ is a solution to $J$. By the existence of the reduction, we get that $\p(\lambda^1_{\gamma(\omega)}(\omega))$ and $\p(\lambda^2_{\gamma(\omega)}(\omega))$ form a solution to $I$. And finally, the Lemma~\ref{lemmarbin} implies that 
    \[\M_n \models \varphi_\WeakPigeon(\Delta(\omega),\nu(\omega),\Lambda^1_\omega(\gamma(\omega)),\Lambda^2_\omega(\gamma(\omega))),\]
    which concludes the proof of $(\dag)$.

    From the validity of every instance of $(\dag)$, Theorem~\ref{thrmrwphp} and validity of predicate logic we obtain
    \[K(\Mnh,\Fnb,\Gnb)\bbl(\forall X)(\forall m)(\exists x_1)(\exists x_2)\varphi_\WeakPigeon(X,m,x_1,x_2)\bbr = \1.\qedhere\]
\end{proof}

\subsection{Further properties of the constructed model}

In the rest of this section we state which additional principles hold in the model.

\begin{thrm}\label{thrmopenind}
    Let $\varphi(x)$ be an open $L_n^2$-formula with parameters from $\Fnb$ and $\Gnb$. Then for every $m\in\M_n$ the open comprehension principle
    \[(\exists X)(\forall y<m)(X(y)\equiv \varphi(y))\]
    and the open induction principle
    \[\lnot \varphi(0) \lor \varphi(m) \lor (\exists x<m)(\varphi(x)\land \lnot \varphi(x+1))\]
    are both valid in $K(\Mnh,\Fnb,\Gnb)$, where we interpret $m$ as the constant function always outputting $m$ in $\Fnb$.
\end{thrm}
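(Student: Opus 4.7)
The plan is to produce explicit witnesses: a set sort element for open comprehension and a number sort element for open induction, both built from $\Tnb$ trees. The main ingredients are the $L_n$-closure of $\Fnb$, the $\Fnb$-compatibility of $\Gnb$ (Lemma~\ref{lemmmisc}), and the observation that composing finitely many subexponential-depth trees yields another subexponential-depth tree.

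For open comprehension, given $\varphi(y)$ open with parameters $\bar\alpha \in \Fnb$ and $\bar\Theta \in \Gnb$, and given $m\in\M_n$, I would construct a tuple $(\gamma_0,\dots,\gamma_{m-1})$ in which $\gamma_i\in\Fnb$ is the random variable whose value at each $\omega$ is $1$ if $\M_n\models\varphi(i,\bar\alpha(\omega),\bar\Theta_\omega)$ and $0$ otherwise. Such a $\gamma_i$ is produced by a single $\Tnb$ tree that runs the $\Tnb$ trees of each $\alpha_j$, applies the computing tuples of each $\Theta_k$ to the resulting values (legal by $\Fnb$-compatibility), and finally evaluates the Boolean structure of $\varphi$; its depth is bounded by some $n^{1/s}$ with $s$ nonstandard. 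Because the construction is uniform in $i$, the entire sequence is coded by a single element of $\M$ and therefore defines an element $X\in\Gnb$. Then for every $\omega$ and every $\beta\in\Fnb$ with $\beta(\omega)<m$ we have $X(\beta)(\omega)\neq 0$ iff $\omega\models\varphi(\beta(\omega),\bar\alpha(\omega),\bar\Theta_\omega)$, so the universal equivalence has truth value $\1$.

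For open induction, I would witness the third disjunct by binary search. Define $\alpha\in\Fnb$ via the following $\Tnb$ tree: first evaluate $\varphi(0)$ and $\varphi(m)$; if either $\lnot\varphi(0)$ or $\varphi(m)$ holds at the current $\omega$, output $0$. Otherwise maintain endpoints $a<b$ with $\varphi(a)$ true and $\varphi(b)$ false (initially $[0,m]$), at each stage evaluate $\varphi$ at the midpoint and update $a$ or $b$ accordingly; after at most $\abs{m}+1$ iterations we have $b=a+1$, and output $a$. Regardless of monotonicity, this binary search terminates in a genuine transition $a<m$ with $\varphi(a)\land\lnot\varphi(a+1)$ on every $\omega$ on which both $\varphi(0)$ holds and $\varphi(m)$ fails. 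Hence on every $\omega$ at least one disjunct is satisfied, giving truth value $\1$.

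The main technical point is verifying that the tree computing $\alpha$ lies in $\Tnb$. If one evaluation of $\varphi$ takes depth at most $n^{1/s}$ for some nonstandard $s$, and $\abs{m}\leq n^{1/t}$ for some nonstandard $t$ (by $m\in\M_n$), then the total depth is at most $(\abs{m}+3)\cdot n^{1/s}\leq n^{1/t+1/s+\text{o}(1)}$. Since $1/s+1/t$ is infinitesimal, one can pick $u=\min(s,t)/3$, which is nonstandard and satisfies $1/u\geq 1/s+1/t$, so the depth is bounded by $n^{1/u}$, placing the tree in $\Tnb$ as required.
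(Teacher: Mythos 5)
Your proposal follows essentially the same route as the paper: the comprehension object is exactly the tuple $(\gamma_0,\dots,\gamma_{m-1})$ of $\Fnb$-elements deciding $\varphi(i)$, and the induction witness is obtained by binary search over the same decision trees — the only cosmetic difference being that you fold the $\varphi$-evaluations directly into the binary-search tree rather than routing them through the comprehension object. Both steps are correct, and you supply more detail than the paper (which just says the induction principle "follows from the comprehension principle by an application of a binary search procedure").

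One small slip in the depth estimate: from $m\le 2^{n^{1/t}}$ you get $\abs m+3\le c\cdot n^{1/t}$ for a small constant $c$, so the total depth is $\le c\cdot n^{1/s+1/t}$, and to bound this by $n^{1/u}$ you need $1/u \ge 1/s+1/t+\log_n c$, not merely $1/u\ge 1/s+1/t$. Your choice $u=\min(s,t)/3$ therefore does not always suffice (e.g.\ when $s,t$ are of order $\log n$). This is easy to repair: for every standard $k$ one has $c\cdot n^{1/s+1/t} < n^{1/k}$ (since $1/s,1/t,\log_n c$ are all infinitesimal), so by overspill the bound $<n^{1/u}$ holds for some nonstandard $u$, which is all that membership in $\Tnb$ requires.
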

\begin{proof}
    This can be proven completely analogously to~\cite[Lemma 20.2.5]{krajicek2010forcing}, we will therefore only describe the basic idea. Let $m\in\M_n$ and let $\varphi(x)$ be an open $L_n^2$ formula with a single free variable with parameters from $\Fnb$ and $\Gnb$. For each number $i\in \M_n, i<m,$ the validity of $\varphi(i)$ for a given $\omega$ can be $i<t$ decided by a $\Tnb$ tree $T_i$ and the depth of all such trees is bounded above by $n^{1/t}, t\in \M_n\setminus \NN$. A tuple of elements of $\Fnb$ each computed by the corresponding tree then determines an element of $\Gnb$ which satisfies the comprehension principle for $\varphi(x)$ and $m$. The induction principle follows directly from the comprehension principle by an application of a binary search procedure.
\end{proof}

Let us give a remark about the logical strength of the separation we gave. The~language $L_n$ contains names for all the functions on $\NN$ which are polynomially bounded (the length of the output is bounded by a polynomial in the length of the input). This allows us to interpret the language $L_{\PV(f)}$, containing the names for every polynomial time algorithm with an oracle access to some polynomially bounded function $f$, in the structure $\M_n$ and therefore in $K(\Mnh,\Fnb,\Gnb)$.  We define $\forall \PV_1(f)$ to be the theory consisting of all universal $L_{\PV(f)}$-formulas true for any interpretation of $f$. The function $f$ may represent a function oracle, hence symbols from $L_{\PV(f)}$ represent (the functions computed by) the polynomial time oracle machines with access to $f$ and $\forall \PV_1(f)$ is simply the set of all universal sentences true for such functions for any oracle.

\begin{thrm}\label{thrmpv}
    Let the function symbol $f$ be interpreted in $K(\Mnh,\Fnb,\Gnb)$ by any $\Theta\in\Gnb$. Then, all the axioms of $\forall\PV_1(f)$ are valid in $K(\Mnh,\Fnb,\Gnb)$.
\end{thrm}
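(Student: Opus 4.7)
The plan is to reduce the valuation of any axiom $(\forall \bar x)\varphi(\bar x)$ of $\forall\PV_1(f)$ in $K(\Mnh,\Fnb,\Gnb)$ to a sample-by-sample verification using two ingredients: a closure lemma stating that $\Fnb$ is closed under polynomial-time $\Theta$-oracle computations, and a transfer argument from $\NN$ to $\M_n$ that turns the axiom's universal truth in the standard model into pointwise truth at every $\omega$. Since $\varphi$ is quantifier-free, once all $L_{\PV(f)}$-function symbols are interpreted by elements of $\Fnb$, the truth value reduces to an $L_n$-Boolean combination of atomic formulas on elements of $\Fnb$, evaluated pointwise modulo $\I$. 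So validity of the axiom amounts to proving that for every $\bar\alpha\in\Fnb$ the set $\{\omega\in\Mnh;\ \M_n\models\varphi(\bar\alpha(\omega),\Theta_\omega)\}$ is all of $\Mnh$.

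The first step I would carry out is the closure lemma: for every $L_{\PV(f)}$-term $\tau(\bar x)$ and every $\bar\alpha\in\Fnb$, the function $\omega\mapsto \tau^{\M_n}(\bar\alpha(\omega);\, f\mapsto \Theta_\omega)$ belongs to $\Fnb$. This proceeds by direct composition of trees, exactly analogously to the construction in the proof of Lemma~\ref{lemmmisc}(3): the term $\tau$ is computed by a polynomial-time oracle machine $M$, which on input $\bar\alpha(\omega)\in\M_n$ performs at most $\abs{\bar\alpha(\omega)}^c\leq n^{c/t'}$ steps and queries (for some $t'\in\M\setminus\NN$, because values of $\bar\alpha$ lie in $\M_n$), each query being answered by one of the trees from the tuple defining $\Theta$, whose depth is at most $n^{1/s}$ for some $s\in\M\setminus\NN$. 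Gluing these trees together in the order of the simulated queries produces a single tree of depth at most $n^{c/t'}\cdot n^{1/s}=n^{c/t'+1/s}$; since $c/t'+1/s$ is infinitesimal, it is $n^{1/u}$ for some $u\in\M\setminus\NN$, so the composite tree lies in $\Tnb$ and the resulting function lies in $\Fnb$.

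The second step is the transfer. Because $(\forall \bar x)\varphi(\bar x)$ is an axiom of $\forall\PV_1(f)$, it is true in $\NN$ for \emph{every} interpretation of $f$. The crucial point is that the behavior of a polynomial-time oracle machine on a fixed input depends only on the finitely many query--answer pairs it produces; hence for every $\bar x$ the formula $\varphi(\bar x)$ is determined by a bounded list of values $f(q_1), f(q_2(f(q_1))),\ldots$ computed by $L_{all}$-operations from $\bar x$. This lets us rephrase $(\forall \bar x)\varphi(\bar x)$, valid in $\NN$ under every $f$, as a pure $L_{all}$-sentence (quantifying universally over $\bar x$ and the potential tuple of answers), which is true in $\NN$. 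By $\M\models\Th_{L_{all}}(\NN)$ it is true in $\M$, and since $\M_n$ is an $L_n$-substructure of $\M$ closed under the relevant operations, it is also true in $\M_n$ under the specific interpretation $f:=\Theta_\omega$ for every $\omega$.

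Combining the two steps, at every $\omega\in\Mnh$ we have $\M_n\models\varphi(\bar\alpha(\omega),\Theta_\omega)$, whence $\bbl\varphi(\bar\alpha)\bbr=\1$, and hence $\bbl(\forall\bar x)\varphi(\bar x)\bbr=\Land_{\bar\alpha\in\Fnb}\1=\1$. The main obstacle I anticipate is the formal expression of the transfer step: cleanly writing the $\PV(f)$-axiom as an $L_{all}$-sentence by internalizing the polynomial depth of query dependence into a bounded universal quantifier, and checking that this reformulation indeed holds in $\M$ under the interpretation $f:=\Theta_\omega$ coming from an arbitrary $\Theta\in\Gnb$. The closure lemma itself is routine tree composition, and the sample-by-sample reduction is the standard pattern for valuating quantifier-free formulas in $K(\Mnh,\Fnb,\Gnb)$.
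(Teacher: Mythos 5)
Your proposal is correct and follows essentially the same route as the paper's proof: since an axiom of $\forall\PV_1(f)$ is a universal $L_{\PV(f)}$-sentence, reduce its validity in $K(\Mnh,\Fnb,\Gnb)$ to a sample-by-sample check over $\omega\in\Mnh$, and at each sample use transfer (the axiom being true in $\NN$ for every interpretation of $f$, hence, as a suitable $L_{all}$-sentence, true in $\M_n$ with $f$ interpreted as $\Theta_\omega$). The paper's own proof is just terser: it states these conclusions directly, leaving implicit both the tree-composition closure fact you make explicit (that $L_{\PV(f)}$-terms with $f\mapsto\Theta$ define functions in $\Fnb$, which the paper covers only by the earlier remark that $L_{\PV(f)}$ ``can be interpreted'' in $K(\Mnh,\Fnb,\Gnb)$) and the $L_{all}$-reformulation that underlies the transfer step.
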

\begin{proof}
    The theory $\forall\PV_1(f)$ consists of all true universal statements for any interpretation of $f$. Therefore, for any $\omega\in\Mnh$, any axiom of $\forall \PV_1(f)$ is true in $\M_n$, after $f$ is interpreted as $\Theta_\omega$, under any substitution of the universal quantifiers. Thus, any axiom of $\forall \PV_1(f)$ is also valid in $K(\Mnh,\Fnb,\Gnb)$.
\end{proof}

This theorem is relevant for interpretation of our construction from the point of view of bounded arithmetic. The theory $\forall\PV_1(f)$ is the true universal extension of the relativized $\PV_1$. The theory $\PV_1$ was first defined in~\cite{kpt1991} as a first order extension of Cook's equational theory $\PV$~\cite{cook1975}.

\section{Concluding remarks}\label{secconc}

In this paper we have developed some basic theory around the new concept of a limit object, the \emph{wide limit}. In the author's view, it provides a more elementary counterpart to Krajíček's structures $K(F,G)$ into which they can be expanded while preserving the values of sentences restricted to their universe (Theorem~\ref{thrmtrans}). In particular, even though the ideas which are needed to prove our many-one separation (Theorem~\ref{thrmsep}) could be collected to a direct proof of this separation, the wide limit provides a concrete semantic interpretation of this separation: An observer, who queries the images of a random map $\omega\in\Mnh$ only polynomially many times, will observe with high probability a map without a collision. This map is embodied by our limit object $\lim_{\Fnb} \Mnh$.

The problem $\OntoWeakPigeon$ and its variations have been investigated in the context of $\TFNP$ ($rPHP^{2n}_{n}$, $rWPHP$ in ~\cite{Jerabek2009},\cite{muller2021}, `lossy code' in~\cite{korten2022}) and its relation to $\WeakPigeon$ follows from well-known results. A nonreducibility of $\WeakPigeon$ to a problem stronger than $\OntoWeakPigeon$, namely $\textsc{RetractionPigeon}$, follows from a lower bound on the Sherali-Adams proof system in~\cite{dantchev2009tight} and established connections between propositional proofs and total NP search problems (see~\cite{goos2024separations}).

The main reason why was the problem $\OntoWeakPigeon$ total in the constructed model was that it had many solutions which could be checked in a constant number of queries. It would be interesting to see how this can be generalized for other total search problems.

\begin{prob}
    Characterize which $\TFNP$ problems are total in $K(\Mnh,\Fnb,\Gnb)$. 
\end{prob}

In~\cite{muller2021} the totality of certain total search problems is relatively unprovable even while preserving quite a lot of induction. In our model $K(\Mnh,\Fnb,\Gnb)$ only $\forall\PV_1(f)$ was verified. It would be interesting to investigate further how much induction can be verified in the models emerging as expansions of wide limits. In~\cite{krajicek2010forcing} there are several models of the theory $V^0_1$ constructed. However, these are obtained by expanding the family $F$ and using a suitable switching lemma to obtain a form of quantifier elimination, which in our case makes the wide limit interpretation unnatural because for this construction a more complex sample space is used. 

A general direction of further research, which would be particularly interesting, would be to characterize valid sentences of some concrete wide limit without the direct construction and thus to prove upper and lower bounds for the family $F$ of functions being considered.

\section*{Acknowledgement}

This work is based on the author's master's thesis~\cite{jezil2022thesis} which was completed under the supervision of Jan Krajíček. The author thanks Eitetsu Ken for comments on a~draft of this paper.

\bibliography{wlbib}{}
\bibliographystyle{plain}
\end{document}